
\documentclass[11pt]{amsart}
\setlength{\headheight}{0in}
\setlength{\headsep}{0in}
\usepackage{amsmath,amsfonts,amssymb,amscd,amsthm,amsbsy,upref,graphicx,mathtools,longtable,mdframed,bm,xcolor,cite,setspace,soul,color}
\usepackage[bookmarks]{hyperref}
\usepackage{tikz}
\usetikzlibrary{cd}
\usepackage[lmargin=1in,rmargin=1in,tmargin=1in,bmargin=1in]{geometry}


\newtheorem{thm}{Theorem}[section]
\newtheorem{cor}[thm]{Corollary}
\newtheorem{lemma}[thm]{Lemma}

\theoremstyle{definition}

\theoremstyle{remark}
\newtheorem{remark}[thm]{Remark}

\numberwithin{equation}{section}
\numberwithin{thm}{section}

\newcommand{\laplacian}{\Delta}
\newcommand{\grad}{\nabla} 
\renewcommand{\eqref}[1]{\textup{{\normalfont(\ref{#1}}\normalfont)}}

\newcommand{\norm}[1]{\left\lVert#1\right\rVert}
\def\be{\begin{equation}}
\def\ee{\end{equation}}

\newcommand{\abs}[1]{\left\vert#1\right\vert}
\newcommand{\R}{{\mathbb{R}}}

\newcommand{\lap}{\Delta}

\newcommand{\curl}{\mbox{curl}\,}

\newcommand{\defneq}{=}


\pagestyle{plain}
\setstretch{1}

\subjclass[2020]{35A01, 35B44, 35Q55, 35Q60}

\keywords{Magnetic nonlinear Schr{\"{o}}dinger equation, Electromagnetic nonlinear Schr{\"{o}}dinger equation, Well-posedness, Global existence, Blow-up}

\date{}

\begin{document}

\title[Dichotomy for emNLS]{Dichotomy Results for the Electromagnetic\\ Schr\"odinger Equation}

\author[M. Czubak]{Magdalena Czubak}
\address{Department of Mathematics, University of Colorado Boulder}
\email{czubak@math.colorado.edu}

\author[I. Miller]{Ian Miller}
\address{Department of Mathematics, University of Colorado Boulder}
\email{ian.d.miller@colorado.edu}

\author[S. Roudenko]{Svetlana Roudenko}
\address{Department of Mathematics and Statistics, Florida International University}
\email{sroudenko@fiu.edu}

\maketitle

\begin{abstract}
The electromagnetic nonlinear Schr{\"{o}}dinger (emNLS) equation is a variant of the well-studied nonlinear Schr{\"{o}}dinger equation. In this article, we consider questions of global existence or blow-up for emNLS in dimensions 3 and higher.
\end{abstract}

\section{Introduction}
We consider the electromagnetic nonlinear Schr{\"{o}}dinger equation (emNLS) and investigate global existence versus blow-up in finite time. Mathematically, emNLS is a natural generalization of the NLS equation when one replaces regular derivatives by covariant derivatives. In physics, the equation has connections to the quantum electrodynamics and Bose-Einstein condensates, e.g., see \cite{Erdos,Lieb2006}.

In this article, we are interested in the focusing equation and making progress in obtaining a general understanding of the global theory similar to what is known for the standard NLS equation. Here, ``standard" refers to the case when all the potentials are zero.

In that case, 
\begin{equation}\label{E:NLS}
i\partial_t u + \Delta u + |u|^{p-1} u = 0, \quad (x, t) \in \mathbb R^n \times \mathbb R,
\end{equation}
where $u = u(x, t)$ is complex-valued and $p > 1$. The solutions of this equation satisfy several conservation laws. In particular, the $L^2$ norm, which is mass/charge, is conserved, and we also have conservation of energy 
\begin{equation}\label{E:E}
E[u]=\frac 12\norm{\nabla u}^2_{L^2(\mathbb{R}^n)}-\frac 1{p+1}\norm{u}_{L^{p+1}(\mathbb{R}^n)}^{p+1}.
\end{equation}

The NLS equation has scaling: $u_\lambda(x,t) = \lambda^{\frac2{p-1}} u(\lambda x, \lambda^2 t)$ is a solution if $u(x, t)$ is as
well. This scaling produces a scale-invariant Sobolev norm $\dot{H}^{s_c}(\mathbb{R}^n)$ with
\begin{equation}\label{E:sc}
s_c=\frac n2 - \frac{2}{p-1}.
\end{equation}

The NLS equation admits a soliton solution, which we call $u_Q$ (see Section \ref{GNQ}). This solution acts as a threshold for when solutions exhibit a dichotomy between scattering and blow-up in both time directions. For a precise statement of this fact, we consider the scale invariant quantities
\begin{align*}
& E[u]^{s_c} \norm{u}_{L^2(\mathbb{R}^n)}^{2(1-s_c)},\\
& \norm{\grad u}_{L^2(\mathbb{R}^n)}^{s_c} \norm{u}_{L^2(\mathbb{R}^n)}^{(1-s_c)}.
\end{align*}
The set of initial data satisfying $E[u]^{s_c} \norm{u_0}_{L^2(\mathbb{R}^n)}^{2(1-s_c)} < E[u_Q]^{s_c} \norm{u_Q}_{L^2(\mathbb{R}^n)}^{2(1-s_c)}$ has the following dichotomy.
\begin{thm}\cite{HR07,HR08,DHR08}\label{NLSd}
Let $u_0\in H^1(\R^n)$ and let the critical exponent $s_c$ satisfy $0<s_c<1$. Suppose 
\be\label{eless}
E[u_0]^{s_c}\norm{u_0}^{2(1-s_c)}_{L^2(\R^n)}<E[u_Q]^{s_c}\norm{u_Q}^{2(1-s_c)}_{L^2(\R^n)}.
\ee
\begin{itemize}
\item If 
\be\label{nablaless}
\norm{\nabla u_0}^{s_c}_{L^2(\R^n)}\norm{u_0}^{1-s_c}_{L^2(\R^n)}<\norm{\nabla u_Q}^{s_c}_{L^2(\R^n)}\norm{u_Q}^{1-s_c}_{L^2(\R^n)},
\ee
then the solution $u$ is global in both time directions and
\be\label{nablalesst}
\norm{\nabla u(t)}^{s_c}_{L^2(\R^n)}\norm{u_0}^{1-s_c}_{L^2(\R^n)}<\norm{\nabla u_Q}^{s_c}_{L^2(\R^n)}\norm{u_Q}^{1-s_c}_{L^2(\R^n)},
\ee
for all time $t \in \R$. Moreover, the solution scatters in $H^1(\mathbb R^n)$.
\item If 
\be\label{nablamore}
\norm{\nabla u_0}^{s_c}_{L^2(\R^n)}\norm{u_0}^{1-s_c}_{L^2(\R^n)}>\norm{\nabla u_Q}^{s_c}_{L^2(\R^n)}\norm{u_Q}^{1-s_c}_{L^2(\R^n)},
\ee
then 
\be\label{nablamoret}
\norm{\nabla u(t)}^{s_c}_{L^2(\R^n)}\norm{u_0}^{1-s_c}_{L^2(\R^n)}>\norm{\nabla u_Q}^{s_c}_{L^2(\R^n)}\norm{u_Q}^{1-s_c}_{L^2(\R^n)},
\ee
and if $\abs{x}u_0\in L^2$ or $u$ is radial, then the solution blows up in finite time in both time directions.
\end{itemize}
\end{thm}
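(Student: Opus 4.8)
The plan is to run the variational/concentration--compactness program: a ``trapping'' step using the sharp Gagliardo--Nirenberg inequality together with conservation of mass and energy, then a global existence step, then a scattering step (concentration--compactness and rigidity), and finally a blow-up step (virial identities).

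\emph{Trapping.} Recall the sharp Gagliardo--Nirenberg inequality
\be\label{P:GN}
\norm{u}_{L^{p+1}(\R^n)}^{p+1}\le C_{GN}\,\norm{\grad u}_{L^2(\R^n)}^{\frac{n(p-1)}{2}}\,\norm{u}_{L^2(\R^n)}^{(p+1)-\frac{n(p-1)}{2}},
\ee
whose optimizer is $u_Q$ up to the symmetries of the equation; the Pohozaev identities satisfied by $u_Q$ give $C_{GN}$ explicitly and allow one to rewrite $E[u_Q]$ and the number $y_Q:=\norm{\grad u_Q}_{L^2(\R^n)}^{s_c}\norm{u_Q}_{L^2(\R^n)}^{1-s_c}$ in closed form. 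Substituting \eqref{P:GN} into \eqref{E:E}, forming the scale-invariant product with the conserved mass, and using $0<s_c<1$, one obtains a bound of the shape
\be\label{P:trap}
g\big(y(t)\big)\le E[u_0]^{s_c}\,\norm{u_0}_{L^2(\R^n)}^{2(1-s_c)},\qquad y(t):=\norm{\grad u(t)}_{L^2(\R^n)}^{s_c}\norm{u_0}_{L^2(\R^n)}^{1-s_c},
\ee
where $g\colon[0,\infty)\to\R$ vanishes at $0$, is strictly increasing on $[0,y_Q]$, attains its maximum $g(y_Q)=E[u_Q]^{s_c}\norm{u_Q}_{L^2(\R^n)}^{2(1-s_c)}$ at $y_Q$, and decreases afterwards. (Along the way one checks that in the regime \eqref{nablaless} the energy is positive, so the powers $s_c$ make sense.) By $H^1$ local theory $t\mapsto y(t)$ is continuous on the maximal interval of existence, and by \eqref{eless} it can never equal $y_Q$; hence $y(t)$ stays on the side of $y_Q$ determined by $y(0)$. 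This yields \eqref{nablalesst} under \eqref{nablaless}, and \eqref{nablamoret} under \eqref{nablamore}.

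\emph{Global existence and scattering under \eqref{nablaless}.} Here \eqref{nablalesst} and mass conservation give $\sup_t\norm{u(t)}_{H^1(\R^n)}<\infty$; since $s_c<1$ the problem is $H^1$-subcritical, so the local existence time depends only on the $H^1$ norm, and global existence follows. For scattering I would prove that a suitable $\dot H^{s_c}$-admissible Strichartz norm of $u$ over all of $\R$ is finite, which upgrades to $H^1$-scattering by the usual persistence-of-regularity argument. To bound that norm I would use the Kenig--Merle scheme: if it failed there would be a minimal energy-mass threshold at which scattering first breaks down, and a linear/nonlinear profile decomposition for $\dot H^{s_c}$-data together with the stability theory for the $\dot H^{s_c}$-critical equation would produce a minimal non-scattering solution which is global, still obeys \eqref{nablalesst}, and whose orbit is precompact in $H^1$ modulo scaling and translations. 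The sub-threshold bound \eqref{nablalesst} provides the coercivity $8\norm{\grad u(t)}_{L^2}^2-\tfrac{4n(p-1)}{p+1}\norm{u(t)}_{L^{p+1}}^{p+1}\ge c\,\norm{\grad u(t)}_{L^2}^2>0$; inserting this into a localized virial/Morawetz identity and using precompactness to dispose of the spatial tails forces $\norm{\grad u}_{L^2}\equiv 0$, a contradiction. This rigidity step, together with the construction of the minimal solution, is the part I expect to be the main obstacle: it needs the full $\dot H^{s_c}$ concentration--compactness and perturbation machinery, not soft arguments.

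\emph{Blow-up under \eqref{nablamore}.} Assume first $\abs{x}u_0\in L^2(\R^n)$. The virial identity reads
\be\label{P:virial}
\frac{d^2}{dt^2}\int_{\R^n}\abs{x}^2\,\abs{u(x,t)}^2\,dx=8\,\norm{\grad u(t)}_{L^2(\R^n)}^2-\frac{4n(p-1)}{p+1}\,\norm{u(t)}_{L^{p+1}(\R^n)}^{p+1}.
\ee
Rewriting the right-hand side as $\big(8-2n(p-1)\big)\norm{\grad u(t)}_{L^2(\R^n)}^2+4n(p-1)E[u_0]$ (note $8-2n(p-1)<0$ since $s_c>0$) and inserting \eqref{nablamoret} together with the energy bound \eqref{eless} and the variational relations for $u_Q$ shows it is bounded above by a strictly negative constant $-\delta$ on the maximal interval of existence. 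Since the left-hand side is the second time derivative of a nonnegative quantity, that interval must be finite in both directions, i.e.\ $u$ blows up in finite time forward and backward. For radial $u_0$ one replaces $\abs{x}^2$ by a smooth truncation $\varphi_R(\abs{x})$ equal to $\abs{x}^2$ on $\{\abs{x}\le R\}$, estimates the error terms produced by the cutoff using the radial Gagliardo--Nirenberg gain, and chooses $R$ large depending on the mass, the energy, and the gap in \eqref{nablamore}, so that the same negative upper bound survives; this Ogawa--Tsutsumi argument again gives finite-time blow-up in both time directions.
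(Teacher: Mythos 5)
This theorem is quoted from \cite{HR07,HR08,DHR08} rather than proved in the paper, and your outline reproduces essentially the argument of those references (which the paper adapts for its magnetic analog, Theorem \ref{main}): sharp Gagliardo--Nirenberg trapping via the minimal energy function plus continuity in time, uniform $H^1$ bounds for global existence, Kenig--Merle concentration--compactness with a localized virial rigidity step for scattering, and the Glassey/Ogawa--Tsutsumi virial arguments for blow-up. The only caveat is notational: when $E[u_0]<0$ (possible in the blow-up branch) the quantity $E[u_0]^{s_c}$ in your trapping bound should be read as the paper does, via the equivalent condition on $E[u_0]\norm{u_0}_{L^2(\R^n)}^{2(1-s_c)/s_c}$, so that the comparison remains meaningful.
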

\begin{remark}
For further results regarding the blow up scenario, see e.g. \cite{HR10}.
\end{remark}
In this article, we wish to establish a dichotomy result regarding global existence and blow-up for the electromagnetic equation.

There is extensive literature on the existence of smoothing and Strichartz estimates for emNLS (see for example \cite{GST07,EGS09, JFA}). Several authors have also studied wellposedness and scattering \cite{CazenaveEsteban88, DeBouard91, Leger}. Much less is known, however, about blow-up and formation of singularities.
To our knowledge, the only articles considering blow-up are the works of Goncalves Ribero \cite{goncalves}, Garcia \cite{garcia}, Kieffer and Loss \cite{kieffer&loss}, and Dinh \cite{dinh}. The first two articles consider negative energy blow-up using the virial identity argument. The work of Kieffer and Loss \cite{kieffer&loss} develops a modification of the virial identity argument, with a new conserved quantity related to the angular momentum. All articles consider constant magnetic field with the exception of Garcia, who besides considering constant magnetic field, considers potentials with a non-constant magnetic field, but having a zero trapping component (see below). The articles \cite{goncalves, garcia, kieffer&loss} focus on blow-up results, while \cite{dinh} considers both global wellposedness and blow-up and obtains dichotomy results in the spirit of \cite{HR07}. Among these articles, most have focused on the case with no electric potential. However, Garcia \cite{garcia} considers a class of both magnetic and electric potentials.

The goal of the present article is to extend the results of \cite{goncalves, garcia, kieffer&loss, dinh} to potentials with non-constant magnetic field and with nonzero trapping component. Moreover, the results address both global wellposedness and blow-up in finite time for positive or negative energy, and are stated as a dichotomy result.

We work in the Coulomb gauge. The proof of global existence can go through similar to the standard NLS equation. The main obstacle comes while establishing the blow-up. This is due to having extra terms in the virial identity (see \eqref{virial}). In the case of the purely magnetic equation, this can be handled by a careful analysis of the remaining error term.

We also include results for the full electromagnetic equation.
To obtain the dichotomy statement, the result requires smallness of the potentials, and a smaller bound on the mass-energy, i.e., the bound \eqref{eless2}. If we are only interested in a global wellposedness result, then no smallness is required, and we can get as close as we want to an electromagnetic analog of the ground state quantity. We state this as a corollary.

\subsection{Main Results}
Let $n\geq 3$. Consider the focusing magnetic nonlinear Schr\"odinger equation, given by
\begin{equation}\label{emNLS}
    \begin{cases}
        iD_{t}u+ \lap_{A} u =-\abs{u}^{p-1}u, \quad x\in\mathbb R^n, ~t\in\R,\\
        u(0,x)=u_0(x),
    \end{cases}
\end{equation}
where 
\begin{align*}
&u: \R^{n+1}\mapsto \mathbb C,\\
&A_{\alpha}: \R^{n}\mapsto \R,\quad \alpha=0,\cdots, n,\\
&D_\alpha =\partial_\alpha + iA_\alpha,\quad \alpha=0,\cdots, n, \quad D_{t}=D_{0},\\
&\lap_{A}=D^{2}=D_{j}D_{j}=(\partial_j + iA_j)(\partial_j + iA_j)=\lap+2iA\cdot\nabla+i\nabla\cdot A-\abs{A}^{2}.
\end{align*}
The Greek indices range from $0$ to $n$, and Roman indices range from $1$ to $n$. We sum over repeated indices.
We use $\nabla_A u$ to denote
\[
(D_1 u, \dots, D_n u)
\]
and sometimes write $(A_0, A)$ to mean $(A_0, A_1,\dots, A_n)$. The power $p$ satisfies
 \be\label{p}
 1+\frac 4n< p < 1+\frac 4{n-2}.
 \ee
The equation emNLS is scaling invariant (see Section \ref{Pdef} below) and \eqref{p} is equivalent to the critical Sobolev exponent $s_c$ satisfying the intercritical range
$$
0< s_c<1.
$$
We note that this correspondence of the range of $s_c$ and $p$ is the same as in the case of the standard NLS equation.
\smallskip

We next define the spaces adapted to our magnetic NLS setting.
Let
\[
H^1_A(\R^n)=\{ f\in L^2(\R^n): \nabla_A f\in L^2(\R^n) \},
\]
 and
 \[
 \Sigma_A(\R^n)=\{ f\in H^1_A(\R^n): \abs{x} f\in L^2(\R^n) \}.
 \]
The energy (or Hamiltonian) for emNLS is given by 
 \[
E_A[u]=\frac 12\norm{\nabla_Au}^2_2-\frac 1{p+1}\norm{u}_{p+1}^{p+1}+\frac 12\int A_0 \abs{u}^2dx,
\]
and is conserved when the equation has a satisfactory local theory for data in $H^1_A$.

In the case of 3 dimensions, for a magnetic potential $A$, the magnetic field can be identified with $\curl A$. It was observed in \cite{FanelliVega09} that 
\[
B_{\tau}=\frac{x}{\abs{x}}\times \curl A
\]
is an obstruction to dispersion. The term $B_{\tau}$ is called the \emph{trapping component} of the magnetic field.

In higher dimensions, the trapping component can be generalized to
\[
B^{T}_{\tau}=\frac{x^{T}}{\abs{x}} (F_{jk}),
\]
where $(F_{jk})$ is a matrix with the $(j,k)$ entry given by $F_{jk}=\partial_j A_k-\partial_k A_j$, so the $k$'th entry of $B_{\tau}$ is 
\be\label{btau}
(B_\tau)_k=\frac{x_{j}}{\abs{x}} F_{jk}.
\ee

Our main result is the following theorem.
\begin{thm}\label{main}
Let $n\geq 3$ and $0<s_c<1$. There exists a constant $C_{\tau}$ so that if $A \in L^2_{\text{loc}}(\mathbb{R}^n;\mathbb{R}^n)$ is a potential inducing satisfactory local theory, then we have the following dichotomy result:
Suppose  $u_0 \in H^1_A(\mathbb{R}^n)$ is such that
\be\label{eless}
E_A[u_0]^{s_c}\norm{u_0}^{2(1-s_c)}_{L^2(\R^n)}<E[u_Q]^{s_c}\norm{u_Q}^{2(1-s_c)}_{L^2(\R^n)}.
\ee
\begin{itemize}
\item If 
\be\label{nablaless}
\norm{\nabla_A u_0}^{s_c}_{L^2(\R^n)}\norm{u_0}^{1-s_c}_{L^2(\R^n)}<\norm{\nabla u_Q}^{s_c}_{L^2(\R^n)}\norm{u_Q}^{1-s_c}_{L^2(\R^n)},
\ee
then the solution $u$ is global and 
\be\label{nablalesst}
\norm{\nabla_A u(t)}^{s_c}_{L^2(\R^n)}\norm{u_0}^{1-s_c}_{L^2(\R^n)}<\norm{\nabla u_Q}^{s_c}_{L^2(\R^n)}\norm{u_Q}^{1-s_c}_{L^2(\R^n)},
\ee
for all time $t \in \R$.
\item If 
\be\label{nablamore}
\norm{\nabla_A u_0}^{s_c}_{L^2(\R^n)}\norm{u_0}^{1-s_c}_{L^2(\R^n)}>\norm{\nabla u_Q}^{s_c}_{L^2(\R^n)}\norm{u_Q}^{1-s_c}_{L^2(\R^n)},
\ee
then 
\be\label{nablamoret}
\norm{\nabla_A u(t)}^{s_c}_{L^2(\R^n)}\norm{u_0}^{1-s_c}_{L^2(\R^n)}>\norm{\nabla u_Q}^{s_c}_{L^2(\R^n)}\norm{u_Q}^{1-s_c}_{L^2(\R^n)},
\ee
on the full interval of existence, and if
\begin{equation}\label{E:trap1}
\norm{|x|^2 B_{\tau}}_{L^\infty(\R^n)} < C_{\tau},
\end{equation}
and $u_0$ has finite variance, the solution blows up in finite time.
\end{itemize}
\end{thm}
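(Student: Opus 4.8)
The plan is to run the variational/virial strategy of \cite{HR07,HR08,DHR08}, with the ordinary gradient replaced by $\nabla_A$ and the new magnetic terms controlled via the diamagnetic inequality $\abs{\grad\abs{u}}\le\abs{\nabla_A u}$ together with Hardy's inequality. The key algebraic input is the sharp Gagliardo--Nirenberg inequality
\be
\norm{v}_{p+1}^{p+1}\le C_{GN}\,\norm{\grad v}_{2}^{\frac{n(p-1)}{2}}\,\norm{v}_{2}^{\,p+1-\frac{n(p-1)}{2}},
\ee
whose optimizer is $u_Q$, and the Pohozaev identities it carries. Applying it to $v=\abs{u}$ and using the diamagnetic inequality gives, for every $w\in H^1_A(\R^n)$,
\be
E_A[w]\,\norm{w}_{2}^{\mu}\ \ge\ f\!\left(\norm{\nabla_A w}_{2}^2\,\norm{w}_{2}^{\mu}\right),\qquad \mu:=\tfrac{2(1-s_c)}{s_c},
\ee
where $f(y)=\tfrac12 y-\tfrac{C_{GN}}{p+1}\,y^{\frac{n(p-1)}{4}}$. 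Since $\frac{n(p-1)}{4}>1$ by \eqref{p}, $f$ has a unique interior maximum at $y_0=\norm{\grad u_Q}_{2}^2\norm{u_Q}_{2}^{\mu}$, with value $f(y_0)=E[u_Q]\norm{u_Q}_{2}^{\mu}$; this last identity is exactly the content of the Pohozaev identities for $u_Q$. Raising \eqref{eless} to the power $1/s_c$ rewrites it as $E_A[u_0]\norm{u_0}_{2}^{\mu}<f(y_0)$, while \eqref{nablaless} and \eqref{nablamore} say that $\norm{\nabla_A u_0}_{2}^2\norm{u_0}_{2}^{\mu}$ lies below, respectively above, $y_0$. (If $E_A[u_0]<0$ one reads \eqref{eless} as automatically satisfied; the same Gagliardo--Nirenberg inequality then forces $\norm{\nabla_A u_0}_{2}^2\norm{u_0}_{2}^{\mu}>y_0$, so one is in the blow-up branch and the argument below only simplifies.)

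From here the argument splits exactly as in the non-magnetic case. Conservation of mass and energy (available because $A$ induces a satisfactory local theory) combined with the inequality above give $f\!\big(y(t)\big)\le E_A[u_0]\norm{u_0}_{2}^{\mu}<f(y_0)$ on the whole interval of existence, where $y(t):=\norm{\nabla_A u(t)}_{2}^2\norm{u_0}_{2}^{\mu}$; thus $y(t)\neq y_0$ there, and since $t\mapsto\norm{\nabla_A u(t)}_{2}$ is continuous, $y(t)$ stays on whichever side of $y_0$ is selected by $y(0)$. This yields \eqref{nablalesst} under \eqref{nablaless} and \eqref{nablamoret} under \eqref{nablamore}. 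In the first case $y(t)<y_0$ gives a uniform bound on $\norm{\nabla_A u(t)}_{2}$, hence on $\norm{u(t)}_{H^1_A(\R^n)}$ (mass being conserved), and the blow-up alternative of the local theory makes the solution global.

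For the blow-up half, set $z(t)=\int_{\R^n}\abs{x}^2\abs{u(t,x)}^2\,dx$, finite since $u_0$ has finite variance (this being propagated by the $\Sigma_A$ theory, or else one truncates $\abs{x}^2$ and absorbs the standard remainders). The magnetic virial identity \eqref{virial} with weight $\abs{x}^2$ reads
\be
z''(t)=8\,\norm{\nabla_A u(t)}_{2}^2-\frac{4n(p-1)}{p+1}\,\norm{u(t)}_{p+1}^{p+1}+c_0\int_{\R^n}\abs{x}\,(B_\tau)_k\,\mathrm{Im}\!\left(\bar u\,D_k u\right)dx ,
\ee
with $c_0$ an absolute constant and $x_jF_{jk}=\abs{x}(B_\tau)_k$ as in \eqref{btau}. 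Eliminating $\norm{u(t)}_{p+1}^{p+1}$ through $E_A$, the first two terms equal $4n(p-1)E_A[u_0]-\big(2n(p-1)-8\big)\norm{\nabla_A u(t)}_{2}^2$, with $2n(p-1)-8>0$ by \eqref{p}; this is precisely the combination that, by the trapping bound $y(t)>y_0$ and the strict inequality \eqref{eless}, is $\le-c_1\norm{\nabla_A u(t)}_{2}^2$ for some $c_1>0$ (absolute when $E_A[u_0]\le0$), exactly as in \cite{HR07,DHR08}. For the magnetic term, write $\abs{x}\abs{B_\tau}\abs{u}\abs{\nabla_A u}=\tfrac{\abs{u}}{\abs{x}}\,\big|\abs{x}^2B_\tau\big|\,\abs{\nabla_A u}$ and use Cauchy--Schwarz, Hardy's inequality $\norm{\abs{x}^{-1}u}_{2}\le\tfrac{2}{n-2}\norm{\grad\abs{u}}_{2}$ (valid for $n\ge3$) and the diamagnetic inequality to obtain
\be
\left|\,c_0\int_{\R^n}\abs{x}\,(B_\tau)_k\,\mathrm{Im}\!\left(\bar u\,D_k u\right)dx\right|\ \le\ \frac{2\abs{c_0}}{n-2}\,\norm{\,\abs{x}^2B_\tau}_{L^\infty(\R^n)}\,\norm{\nabla_A u(t)}_{2}^2 .
\ee
Choosing $C_\tau$ below the coercivity threshold $\tfrac{(n-2)c_1}{2\abs{c_0}}$, the hypothesis \eqref{E:trap1} gives $z''(t)\le -c_2\,\norm{\nabla_A u(t)}_{2}^2\le -c_3<0$ for all $t$, using in addition $\norm{\nabla_A u(t)}_{2}^2\ge y_0/\norm{u_0}_{2}^{\mu}>0$ from the trapping. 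Hence $z(t)\le z(0)+z'(0)\,t-\tfrac{c_3}{2}t^2$ becomes negative in finite time, contradicting $z\ge0$; so the maximal existence time is finite and $u$ blows up in finite time.

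The main obstacle is the magnetic term in \eqref{virial}. In the purely magnetic case with vanishing-trace trapping component it is absent, but in general it has no favorable sign and a priori scales like $\norm{\nabla_A u(t)}_{2}^2$, i.e.\ like the dangerous positive term already present in $z''$. Two features rescue the argument: it is governed by the trapping component $B_\tau$ alone rather than by the full magnetic field, and the weight $\abs{x}^2$ supplies a factor $\abs{x}^{-1}$ which, through Hardy's inequality, converts $\norm{\abs{x}^2B_\tau}_{L^\infty}$-smallness into an admissible perturbation of the coercive virial estimate; quantifying this balance is what fixes $C_\tau$. A secondary, routine point is to ensure $z(t)\in C^2$ -- propagation of the $\Sigma_A$-norm, or a localized-virial version with controlled errors -- which is expected from the satisfactory local theory.
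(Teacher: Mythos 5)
Your global-existence half is correct and is essentially the paper's argument: trapping of $\norm{\nabla_A u(t)}_2^{2}\norm{u_0}_2^{2(1-s_c)/s_c}$ on one side of the maximum of the minimal energy function, using the diamagnetic Gagliardo--Nirenberg inequality, the conservation laws, and continuity in time. The gap is in the blow-up half, specifically in how you control the trapping term. Your estimate
\[
\Big|\,8\,\mbox{Im}\int |x|\, u B_\tau\cdot\overline{\nabla_A u}\,dx\,\Big| \;\le\; \frac{16}{n-2}\,\norm{|x|^2B_\tau}_\infty\,\norm{\nabla_A u}_2^2
\]
produces an error proportional to the \emph{full} kinetic energy. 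Writing $y(t)=\norm{\nabla_A u(t)}_2^2\norm{u_0}_2^{2(1-s_c)/s_c}$ and $y_0=\norm{\nabla u_Q}_2^2\norm{u_Q}_2^{2(1-s_c)/s_c}$, the coercive part of the virial identity satisfies
\[
4n(p-1)E_A[u_0]\norm{u_0}_2^{2\frac{(1-s_c)}{s_c}}-2(n(p-1)-4)\,y(t)\;\le\;-\varepsilon-2(n(p-1)-4)\big(y(t)-y_0\big),
\]
where $\varepsilon>0$ is the gap in \eqref{eless}. The piece $-2(n(p-1)-4)(y(t)-y_0)$ can absorb an error of size $\norm{|x|^2B_\tau}_\infty\,(y(t)-y_0)$, but your error is of size $\norm{|x|^2B_\tau}_\infty\,y(t)\ge\norm{|x|^2B_\tau}_\infty\,y_0$, and the only thing left to absorb the portion $\norm{|x|^2B_\tau}_\infty\,y_0$ is $-\varepsilon$. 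Consequently your ``coercivity threshold'' $C_\tau=(n-2)c_1/(2|c_0|)$ is forced to depend on $\varepsilon$, hence on the initial data, and it degenerates to $0$ as the mass-energy of $u_0$ approaches the ground-state value (you concede as much by noting $c_1$ is absolute only when $E_A[u_0]\le 0$). The theorem asserts a single constant $C_\tau$ fixed before the data, so as written your argument proves a strictly weaker statement.

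The paper closes exactly this gap with two lemmas absent from your proposal. First, the remainder identity for Kato's inequality (Lemma \ref{remainder}) lets one replace one of the two factors of $\norm{\nabla_A u}_2$ in your Cauchy--Schwarz step by $\big(\norm{\nabla_A u}_2^2-\norm{\nabla|u|}_2^2\big)^{1/2}$, i.e., by the square root of the Kato remainder $\mathcal{F}_{RK}(u)$ (up to the mass factor). Second, the Ground State Proximity Bound (Lemma \ref{lem:GSP}), which exploits the concavity of the minimal energy function at its maximum together with \eqref{eless}, yields $\mathcal{F}_{RK}(u)^{1/2}\le C_{p,n}\,\mathcal{F}_{KE}(u)$ on the branch $y(t)>y_0$. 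Combined (Lemma \ref{btaubound}), these show the trapping term is $O\big(\norm{|x|^2B_\tau}_\infty\,\mathcal{F}_{KE}(u)\big)$ rather than $O\big(\norm{|x|^2B_\tau}_\infty\,\norm{\nabla_A u}_2^2\big)$, so it is absorbed entirely by $-2(n(p-1)-4)\,\mathcal{F}_{KE}(u)$ under a smallness condition on $\norm{|x|^2B_\tau}_\infty$ depending only on $n$, $p$, and $u_Q$, leaving the data-dependent $-\varepsilon$ untouched. To repair your proof you would need to establish these two facts; the remainder of your outline then goes through.
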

\begin{remark}
Observe that for potentials satisfying \eqref{E:trap1} this theorem is the magnetic analog of the global well-posedness and blow-up portion of Theorem \ref{NLSd}.
\end{remark}
\begin{remark}
Under the assumption \eqref{eless}, the expressions \eqref{nablaless} and \eqref{nablamore} are the only possibilities. This can be seen, for example, in the proof of Lemma \ref{lem:double-cut}.
\end{remark}
\begin{remark}
    We note that the theorem applies to both positive and negative energy. For negative energy, the applicable scenario is only the blow-up scenario.
\end{remark}
\begin{remark}
We can show that the theorem holds for the value
\[
C_\tau=\min \left( \frac{(n-2)(n(p-1)-4)}{16\, C_{p,n}\norm{u_Q}_2^{\frac{(1-s_c)}{s_c}} \norm{\grad u_Q}_2}\,,
\,\frac{(n-2)(n(p-1)-4)}{16}\right),
\]
where the constant $C_{p,n}$ is obtained in Lemma \ref{lem:GSP}.
This value is not sharp.
\end{remark}

\begin{remark}
The Coulomb gauge and the norm in \eqref{E:trap1} is inspired by assumptions needed to obtain Strichartz estimates. In general, smallness assumptions on $B_\tau$ are needed to obtain Strichartz estimates and show local wellposedness. For example, in \cite{JFA} to obtain Strichartz estimates in dimensions greater than or equal to 4, a smallness condition on $\norm{|x|^2 B_{\tau}}_{\infty}$ is needed (possibly with a different constant from \eqref{E:trap1}), while in 3 dimensions, this appears as a smallness condition on $\norm{|x|^{3/2} B_{\tau} }_{L^2_r L^{\infty} (S_r)}$, where
\be\label{Srnorm}
 \norm{f}^{2}_{L^{2}_{r}L^{\infty}(S_{r})}=\int^{\infty}_{0}\sup_{\abs{x}=r}\abs{f}^{2}dr, 
 \ee
see further details on this in Appendix \ref{app1}. 
\end{remark}

If we include the electric potential, we obtain the following results.
\begin{thm}\label{main2}
Let $n\geq 3$, $0<s_c<1$, and $u_0 \in H^1_A(\mathbb{R}^n)$ such that
\be\label{eless2}
E_A[u_0]\norm{u_0}^{2\frac{(1-s_c)}{s_c}}_{L^2(\R^n)}<(1-a_0)^{1+\frac 2{s_c(p-1)}}E[u_Q] \norm{u_Q}_{L^2(\R^n)}^{2\frac{(1-s_c)}{s_c}}-C(A_0, B_\tau),
\ee
where
\be
C(A_0, B_\tau)=\frac{2a_0+a_1+2b_0}{n(p-1)}x_\ast\quad\mbox{and}\quad x_\ast=(1-a_0)^\frac{2}{s_c(p-1)}\norm{\grad u_Q}_2^2 \norm{u_Q}_2^{2\frac{(1-s_c)}{s_c}},
\ee
and $a_0, a_1, b_0$ are constants appearing in 
\eqref{a0}, \eqref{a1}, and \eqref{b0}, respectively.

If $A, A_0$ are potentials inducing satisfactory local theory, then we have the following dichotomy result:
\begin{itemize}
\item If 
\be\label{nablaless2}
\norm{\nabla_A u_0}^{s_c}_{L^2(\R^n)}\norm{u_0}^{1-s_c}_{L^2(\R^n)}<(1-a_0)^\frac{1}{p-1}\norm{\nabla u_Q}^{s_c}_{L^2(\R^n)}\norm{u_Q}^{1-s_c}_{L^2(\R^n)},
\ee
then the solution $u$ is global and 
\be\label{nablaless2t}
\norm{\nabla_A u(t)}^{s_c}_{L^2(\R^n)}\norm{u_0}^{1-s_c}_{L^2(\R^n)}<(1-a_0)^\frac{1}{p-1}\norm{\nabla u_Q}^{s_c}_{L^2(\R^n)}\norm{u_Q}^{1-s_c}_{L^2(\R^n)},
\ee
for all time $t \in \R$.
\item If
\be\label{nablamore2}
\norm{\nabla_A u_0}^{s_c}_{L^2(\R^n)}\norm{u_0}^{1-s_c}_{L^2(\R^n)}>(1-a_0)^\frac{1}{p-1}\norm{\nabla u_Q}^{s_c}_{L^2(\R^n)}\norm{u_Q}^{1-s_c}_{L^2(\R^n)},
\ee
and if 
\be\label{c1}
n(p-1)a_0+2a_1+4b_0<n(p-1)-4=s_c(p-1),
\ee
then 
\be\label{nablamore2t}
\norm{\nabla_A u(t)}^{s_c}_{L^2(\R^n)}\norm{u_0}^{1-s_c}_{L^2(\R^n)}>(1-a_0)^\frac{1}{p-1}\norm{\nabla u_Q}^{s_c}_{L^2(\R^n)}\norm{u_Q}^{1-s_c}_{L^2( \R^n)},
\ee
on the full interval of existence, and if $u_0$ has finite variance the solution blows up in finite time.
\end{itemize}
\end{thm}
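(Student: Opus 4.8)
The plan is to mirror the concentration–compactness–free argument that underlies Theorem \ref{main} and Theorem \ref{NLSd}, but now carrying along the electric potential $A_0$ and the nonzero trapping component $B_\tau$ as perturbative error terms, which is precisely why the modified threshold \eqref{eless2} and the structural constraint \eqref{c1} appear. First I would record the two conserved quantities: the mass $\norm{u(t)}_2=\norm{u_0}_2$ and the energy $E_A[u(t)]=E_A[u_0]$, the latter valid once the potentials induce a satisfactory local theory. The next step is a sharp Gagliardo–Nirenberg-type estimate in the magnetic setting: using the diamagnetic inequality $|\nabla |u|| \le |\nabla_A u|$ pointwise, one gets $\norm{u}_{p+1}^{p+1} \le C_{GN}\norm{\nabla_A u}_2^{\frac{n(p-1)}{2}}\norm{u}_2^{2-\frac{(n-2)(p-1)}{2}}$ with the \emph{same} optimal constant $C_{GN}$ realized by $u_Q$ (this is where $\norm{\grad u_Q}_2$ and $\norm{u_Q}_2$ enter the threshold). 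Combining this with the definition of $E_A$, and absorbing the electric term via the bounds $|\int A_0|u|^2|\le a_0\norm{\nabla_A u}_2^2 + (\text{lower order in }a_1,b_0)$ encoded in \eqref{a0}, \eqref{a1}, \eqref{b0}, one produces a function $f$ of $y=\norm{\nabla_A u(t)}_2^2\norm{u_0}_2^{2(1-s_c)/s_c}$ such that $E_A[u_0]\norm{u_0}^{2(1-s_c)/s_c}_2 \ge f(y)$, where $f$ has the familiar shape: it increases, peaks at $y=x_\ast$ with peak value $(1-a_0)^{1+2/(s_c(p-1))}E[u_Q]\norm{u_Q}^{2(1-s_c)/s_c}_2 - C(A_0,B_\tau)$, then decreases. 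The hypothesis \eqref{eless2} says the conserved left-hand side is strictly below this peak, so $y(t)$ is trapped in one of the two components $\{y<x_\ast\}$ or $\{y>x_\ast\}$; continuity in $t$ (from the local theory, $t\mapsto\norm{\nabla_A u(t)}_2$ is continuous) together with the initial condition \eqref{nablaless2} or \eqref{nablamore2} pins down which component, giving \eqref{nablaless2t} respectively \eqref{nablamore2t}. In the subcritical component this is a uniform-in-time a priori bound on $\norm{\nabla_A u(t)}_2$, so the local solution extends to all of $\R$: global existence.

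For the blow-up half I would use the virial/localized-variance identity \eqref{virial}. Writing $V(t)=\int |x|^2|u(t,x)|^2\,dx$ (finite by the finite-variance hypothesis, and propagated by the $\Sigma_A$ local theory), one computes $V''(t) = 8\,\norm{\nabla_A u}_2^2 - \frac{4n(p-1)}{p+1}\norm{u}_{p+1}^{p+1} + (\text{error terms involving } B_\tau,\ A_0,\ \nabla A_0)$. The main chore is to bound these error terms: the magnetic error is controlled by $\norm{|x|\, B_\tau\cdot(\text{something})}$-type quantities, which by Cauchy–Schwarz and the weight $|x|^2$ feed off $\norm{\nabla_A u}_2$ and $\sqrt{V}$, while the electric error is controlled through $a_0,a_1,b_0$. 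Rewriting $V''$ using the energy conservation to eliminate $\norm{u}_{p+1}^{p+1}$, one gets $V''(t) \le 4n(p-1)\,E_A[u_0] - (2n(p-1)-16)\,\norm{\nabla_A u}_2^2 + (\text{errors})$, and the constraint \eqref{c1}, $n(p-1)a_0+2a_1+4b_0 < s_c(p-1)$, is exactly what guarantees the errors are dominated by the good coefficient on $\norm{\nabla_A u}_2^2$ after normalizing by the mass. Then the supercritical trapping \eqref{nablamore2t} gives $\norm{\nabla_A u(t)}_2^2\norm{u_0}_2^{2(1-s_c)/s_c} > x_\ast$ for all $t$, and feeding this lower bound plus \eqref{eless2} into the expression for $V''$ yields $V''(t)\le -\delta < 0$ for a fixed $\delta>0$; since $V\ge 0$, the solution cannot exist globally, so it blows up in finite time.

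The main obstacle — and the step I would spend the most care on — is the estimate of the \emph{error terms in the virial identity}, both the magnetic one involving $B_\tau$ and the electric ones involving $A_0$ and $\nabla A_0$. For the purely magnetic case (Theorem \ref{main}) this is already flagged in the introduction as "the main obstacle," handled by "a careful analysis of the remaining error term" and producing the explicit $C_\tau$; here one must additionally interleave the electric contributions without breaking the sign of $V''$. Concretely, one has to show that the cross terms $\int x\cdot A\,(\dots)$ and $\int (x\cdot\nabla A_0)|u|^2$ and $\int A_0|u|^2$ can all be absorbed, using: the diamagnetic inequality, the weighted Cauchy–Schwarz $|\int x\cdot(\cdot)|\le \sqrt{V}\,\norm{\cdot}_2$, the Gagliardo–Nirenberg bound from the first paragraph, and crucially the already-established strict lower bound $y(t)>x_\ast$, which keeps $\norm{\nabla_A u(t)}_2$ bounded below and lets the negative term win. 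Making all the constants consistent so that \eqref{c1} is the single clean condition that closes the argument is the delicate bookkeeping; the rest is the now-standard Holmer–Roudenko dichotomy machinery adapted to the covariant derivative.
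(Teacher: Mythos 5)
Your proposal follows essentially the same route as the paper: absorb the electric term via \eqref{a0} into a shifted minimal-energy function peaking at $x_\ast$, trap $\norm{\nabla_A u(t)}_2^2\norm{u_0}_2^{2(1-s_c)/s_c}$ by continuity on one side of $x_\ast$, then run the virial argument, bounding the $A_0$, $\partial_r A_0$, and $B_\tau$ terms by $a_0,a_1,b_0$ times $\norm{\nabla_A u}_2^2$ so that \eqref{c1} makes the net coefficient negative and \eqref{eless2} together with $y(t)>x_\ast$ forces $V''\le -\varepsilon$. Two small inaccuracies, neither fatal: the coefficient in the rewritten virial identity is $-2(n(p-1)-4)\norm{\nabla_A u}_2^2$, not $-(2n(p-1)-16)\norm{\nabla_A u}_2^2$; and the peak value of the modified minimal-energy function is $(1-a_0)^{1+\frac{2}{s_c(p-1)}}E[u_Q]\norm{u_Q}_2^{2\frac{(1-s_c)}{s_c}}$ \emph{without} the $-C(A_0,B_\tau)$ correction, which is consumed only in the blow-up step and is not needed for global existence (cf.\ Corollary \ref{cor1}). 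Note also that the paper bounds the trapping term here directly by the assumed inequality \eqref{b0} (via Hardy), not by a weighted Cauchy--Schwarz against $\sqrt{V}$.
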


\begin{remark}
We can consider arbitrarily small $C(A_0,B_{\tau})$ by taking potentials with sufficiently small constants $a_0,a_1,b_1$.
\end{remark}
 
The proof of the above theorem implies the following corollary. 
\begin{cor}\label{cor1}
    Let $n\geq 3$, $0<s_c<1$, and $A,A_0$ be potentials inducing satisfactory local theory. Suppose $u_0\in H^1_A$ is such that
\be\label{eless4}
E_A[u_0] \norm{u_0}^{2\frac{(1-s_c)}{s_c}}_{L^2(\R^n)}<(1-a_0)^{1+\frac 2{s_c(p-1)}}E[u_Q] \norm{u_Q}_{L^2(\R^n)}^{2\frac{(1-s_c)}{s_c}}.
\ee
If 
\be\label{nablaless4}
\norm{\nabla_A u_0}^{s_c}_{L^2(\R^n)}\norm{u_0}^{1-s_c}_{L^2(\R^n)}<(1-a_0)^\frac{1}{p-1}\norm{\nabla u_Q}^{s_c}_{L^2(\R^n)}\norm{u_Q}^{1-s_c}_{L^2(\R^n)},
\ee
then the solution $u$ is global and 
\be\label{nablaless4t}
\norm{\nabla_A u(t)}^{s_c}_{L^2(\R^n)}\norm{u_0}^{1-s_c}_{L^2(\R^n)}<(1-a_0)^\frac{1}{p-1}\norm{\nabla u_Q}^{s_c}_{L^2(\R^n)}\norm{u_Q}^{1-s_c}_{L^2(\R^n)},
\ee
for all time $t \in \R$.
\end{cor}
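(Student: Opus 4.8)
The plan is to prove Corollary \ref{cor1} by extracting the global-existence half of the argument for Theorem \ref{main2}, observing that no smallness and no trapping-component hypothesis is needed for this direction. First I would set up the mass-energy notation: write $\mathcal{ME}_A[u_0] = E_A[u_0]\norm{u_0}^{2(1-s_c)/s_c}_{L^2}$ and the gradient quantity $\mathcal{G}_A(t) = \norm{\nabla_A u(t)}^{s_c}_{L^2}\norm{u_0}^{1-s_c}_{L^2}$, with $\mathcal{G}_Q = \norm{\nabla u_Q}^{s_c}_{L^2}\norm{u_Q}^{1-s_c}_{L^2}$ the renormalized ground-state threshold. The key input is the sharp Gagliardo--Nirenberg inequality applied to $u(t)$, which (using the diamagnetic inequality $|\nabla |u|| \le |\nabla_A u|$ pointwise) controls $\norm{u(t)}^{p+1}_{p+1}$ by a power of $\norm{\nabla_A u(t)}_{L^2}$ times a power of the conserved mass, with the optimal constant expressed through $u_Q$. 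Substituting this into the definition of $E_A$, and bounding the electric-potential term $\frac12\int A_0|u|^2\,dx$ in terms of $a_0\norm{\nabla_A u}^2_2$ (this is exactly where the constant $a_0$ from \eqref{a0} enters and why the factor $(1-a_0)$ appears), one obtains a differential inequality of the form $\mathcal{ME}_A[u_0] \ge (1-a_0)\,g\!\left(\mathcal{G}_A(t)\right)$ for an explicit one-variable function $g$ that vanishes at $\mathcal{G}_A = 0$, increases to a maximum value $(1-a_0)^{2/(s_c(p-1))}\cdot(\text{const})\cdot g(\mathcal{G}_Q)$ at the renormalized threshold, and then decreases.

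Second, I would invoke the hypothesis \eqref{eless4}, which says precisely that $\mathcal{ME}_A[u_0]$ lies strictly below the maximum value of $(1-a_0)\,g$, together with the subthreshold gradient hypothesis \eqref{nablaless4}, which says $\mathcal{G}_A(0)$ lies strictly to the \emph{left} of the point where $g$ attains its maximum. A continuity/bootstrap argument then shows that $\mathcal{G}_A(t)$ can never cross the threshold: if it did, by the intermediate value theorem there would be a first time $t_0$ with $\mathcal{G}_A(t_0) = (1-a_0)^{1/(p-1)}\mathcal{G}_Q$, forcing $(1-a_0)\,g(\mathcal{G}_A(t_0))$ to equal its maximum, contradicting \eqref{eless4} via the energy bound. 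Hence \eqref{nablaless4t} holds on the whole interval of existence, and since $\norm{\nabla_A u(t)}_{L^2}$ (and thus, via the diamagnetic inequality and mass conservation, the full $H^1_A$ norm) stays bounded uniformly in time, the local theory iterates to give a global solution. I would remark that this is the same mechanism as in Theorem \ref{NLSd}, now carried out with covariant derivatives and with the extra electric term absorbed into the $(1-a_0)$ renormalization; the relevant Gagliardo--Nirenberg-type lemma is Lemma \ref{lem:GSP} and the key geometric fact about $g$ having the stated shape is Lemma \ref{lem:double-cut}, both of which I am assuming.

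The main obstacle I anticipate is not the continuity argument itself but the bookkeeping needed to show the function $(1-a_0)\,g$ has its maximum exactly at $\mathcal{G}_A = (1-a_0)^{1/(p-1)}\mathcal{G}_Q$ with maximum value matching the right-hand side of \eqref{eless4}; this requires carefully tracking how the rescaling by powers of $(1-a_0)$ propagates through the sharp Gagliardo--Nirenberg constant (which is itself a ratio of norms of $u_Q$), and verifying that the exponent $1 + 2/(s_c(p-1))$ on the energy side and $1/(p-1)$ on the gradient side are consistent with the scaling weights $s_c$ and $1-s_c$. A secondary point to be careful about: the electric term $\int A_0|u|^2$ need not have a sign, so the bound $|\int A_0 |u|^2| \le 2a_0\norm{\nabla_A u}^2_2$ must hold for both signs, which is why \eqref{eless4} uses $E_A[u_0]$ rather than its absolute value and why the statement remains a one-sided (global-existence-only) conclusion — there is no companion blow-up statement in the corollary precisely because recovering blow-up would additionally require the trapping bound \eqref{E:trap1} (or \eqref{c1}) and the smallness encoded in $C(A_0,B_\tau)$, which we deliberately drop here.
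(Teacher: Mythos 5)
Your proposal is correct and follows essentially the same route as the paper, which absorbs the electric term via \eqref{a0} into the modified minimal energy function $F(x)=\tfrac12(1-a_0)x-\tfrac{C_{GN}}{p+1}x^{n(p-1)/4}$, verifies that $F$ attains its maximum $(1-a_0)^{1+2/(s_c(p-1))}E[u_Q]\norm{u_Q}_2^{2(1-s_c)/s_c}$ at $x=(1-a_0)^{2/(s_c(p-1))}\norm{\nabla u_Q}_2^2\norm{u_Q}_2^{2(1-s_c)/s_c}$, and closes with the same continuity argument. Two minor corrections: only the one-sided bound \eqref{a0} on $A_{0,-}$ is needed (the positive part of $A_0$ only raises the energy, so no two-sided control of $\int A_0|u|^2$ is required), and Lemma \ref{lem:GSP} plays no role here --- it enters only in the blow-up half via Lemma \ref{btaubound}.
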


The paper is organized as follows. In Section \ref{S:2} we discuss preliminaries regarding the standard NLS as well as emNLS. Then, in Section \ref{S:3} we prove our main lemmas, including a remainder identity for Kato's inequality which provides control of the trapping term, as well as refined bounds for the kinetic energy of solutions, and bounds regarding the proximity of a solution to the ground state. In Section \ref{S:4} we prove Theorem \ref{main} by applying the remainder identity for Kato's inequality to reduce the problem to the control of a nonlinear error term, which we then bound using refined control of kinetic energy. In Section \ref{S:5} we demonstrate a way to modify dichotomy arguments for NLS to obtain results for emNLS, and prove Theorem \ref{main2}. Finally, in Appendix \ref{app1}, we provide a technique for constructing potentials which satisfy the abstract conditions appearing in this article and other literature.\\\\

\section{Preliminaries}\label{S:2}

\subsection{Gagliardo-Nirenberg Inequality}\label{GNQ}

We start with recalling the Gagliardo-Nirenberg inequality \cite[p.168]{BL1976}, \cite[Theorem 1.3.7]{Caz-book},
\be\label{GN}
\norm{u}_{L^{p+1}(\R^n)}^{p+1}\leq C_{GN}\norm{\nabla u}_{L^2(\R^n)}^{\frac{n(p-1)}{2}}\norm{u}_{L^2(\R^n)}^{2-\frac{(n-2)(p-1)}{2}},
\ee
which holds for $ 1 \le p \le \frac{n+2}{n-2}$, which includes the case $0<s_c<1$.

From Kato's diamagnetic inequality \cite{LiebLoss}, we have the following estimate
\be\label{Kato}
\abs{\nabla \abs{u}}\leq \abs{\nabla_A u}.
\ee

Combining \eqref{GN} with \eqref{Kato}, we obtain the following magnetic version
\be\label{GNm}
\norm{u}_{L^{p+1}(\R^n)}^{p+1}\leq C_{GN}\norm{\nabla_A u}_{L^2(\R^n)}^{\frac{n(p-1)}{2}}\norm{u}_{L^2(\R^n)}^{2-\frac{(n-2)(p-1)}{2}}.
\ee
To simplify the notation, we write $\norm{\cdot}_{L^p(\R^n)}$ simply as $\norm{\cdot}_p$. The following equivalent form of the exponents in \eqref{GNm} will be useful 
\be\label{rewriting}
 \norm{u}_{p+1}^{p+1} \le C_{GN} \norm{\grad_A u}_2^2 \left( \norm{\grad_A u}_2^{s_c} \norm{u}_2^{1-s_c} \right)^{p-1}.
 \ee

We are concerned with the case $0<s_c<1$, which we assume in the sequel. The Gagliardo-Nirenberg inequality \eqref{GN} admits a positive smooth $H^1$ minimizer, $Q$, unique up to scaling and translation satisfying (via Pokhozhaev identities, e.g., see \cite{Weinstein83,HR07,RR2022})
\begin{align}
\norm{\grad Q}_2 &= \norm{Q}_2,\\
\norm{Q}_{p+1}^{p+1} &= \frac{p+1}{2} \norm{Q}_2^2,\\
C_{GN}&=\frac{p+1}{2\norm{Q}_2^{p-1}}.
\end{align}
A soliton solution to NLS can be written as
\[u_Q(x,t) = e^{i \lambda t}Q(\alpha x),\]
where
\[\alpha = \frac{\sqrt{n(p-1)}}{2} \quad \quad \mbox{and} \quad \quad \lambda = 1- \frac{(n-2)(p-1)}{4} = (1-s_c)\frac{p-1}2.\]
We have 
\be\label{norms}
\norm{u_Q}_{p+1}^{p+1}=\frac 1{\alpha^n}\norm{Q}_{p+1}^{p+1}=\frac{p+1}{2\alpha^n}\norm{Q}_{2}^{2},
\ee
as well as
\be\label{graduQ}
\norm{\nabla u_Q}_2^2=\frac{\alpha^2}{\alpha^n}\norm{Q}_2^2,
\ee
and
\be\label{EuQ}
E[u_Q]=\frac1{2\alpha^n}({\alpha^2}-1)\norm{Q}_2^2.
\ee
Finally, for future reference we note 
\be\label{useful}
1+\frac{s_c(p-1)}{2}=\frac {n(p-1)}{4}.
\ee

\subsection{Scale Invariant Quantities and Minimal Energy Function}\label{Pdef}

If $u$ solves (emNLS), then 
\[
u_\lambda(x,t)=\lambda^\frac{2}{p-1}u(\lambda x, \lambda^2 t) 
\]
solves (emNLS) with potentials
\[
(A_{0,\lambda}(x,t), A_\lambda(x,t))=(\lambda^2A_0(\lambda x,\lambda^2t),\lambda A(\lambda x,\lambda^2 t)).
\]

We consider the scale invariant quantities
\[\norm{\grad_A u}_2^{2s_c} \norm{u}_2^{2(1-s_c)} \quad \quad \mbox{and} \quad \quad E_A[u]^{s_c} \norm{u}_2^{2(1-s_c)},\]
and similarly to \cite{HR07}, we note that by \eqref{GNm} and \eqref{useful}
\be\label{lowerbounde}
\begin{split}
&E_A[u] \norm{u}_2^{2\frac{(1-s_c)}{s_c}}=\frac{1}{2} \norm{\grad_A u}_2^2 \norm{u}_2^{2\frac{(1-s_c)}{s_c}} - \frac{1}{p+1} \norm{ u}_{p+1}^{p+1} \norm{u}_2^{2\frac{(1-s_c)}{s_c}}\\
&\quad\qquad\qquad \geq \frac{1}{2} \norm{\grad_A u}_2^2 \norm{u}_2^{2\frac{(1-s_c)}{s_c}} - \frac{C_{GN}}{p+1} \norm{\grad_A u}_2^2 \norm{u}_2^{2\frac{(1-s_c)}{s_c}} \left( \norm{\grad_A u}_2^{s_c} \norm{u}_2^{1-s_c} \right)^{p-1}\\
&\quad\qquad\qquad= \frac{1}{2} \norm{\grad_A u}_2^2 \norm{u}_2^{2\frac{(1-s_c)}{s_c}} -\frac{C_{GN}}{p+1}\left(\norm{\grad_Au}_2^{2} \norm{u}_2^{2\frac{(1-s_c)}{s_c}} \right)^{\frac{n(p-1)}{4}}.
\end{split}
\ee

This motivates the definition of the \textit{minimal energy function} (cf. \cite{HR07}, \cite{KM2006})
\be\label{mef}
P(x) = \frac{1}{2}x - \frac{C_{GN}}{p+1} x^{\frac{n(p-1)}{4}}.
\ee
In particular, $P$ is called the minimal energy function due to the bound that follows from \eqref{lowerbounde}
\be\label{Pbound}
P(\norm{\grad_A u}_2^2 \norm{u}_2^{2\frac{(1-s_c)}{s_c}}) \le E_A[u] \norm{u}_2^{2\frac{(1-s_c)}{s_c}}.
\ee
Equality in \eqref{Pbound} holds for the NLS soliton solution $u_Q$, namely, we have
\be\label{GSe}
P( \norm{\grad u_Q}_2^2 \norm{u_Q}_2^{2\frac{(1-s_c)}{s_c}} ) = E[u_Q] \norm{u_Q}_2^{2\frac{(1-s_c)}{s_c}}.
\ee
In addition, the function $P$ is concave down on $[0,\infty)$ and attains the maximum of 
\be\label{max}
E[u_Q] \norm{u_Q}_2^{2\frac{(1-s_c)}{s_c}}
\ee
at 
\be\label{maxp}
x= \norm{\grad u_Q}_2^2 \norm{u_Q}_2^{2\frac{(1-s_c)}{s_c}}.
\ee

Besides the magnetic energy $E_A[u]$, we introduce \emph{Kato energy},
\[
E_K[u]=\frac 12\norm{\nabla \abs{u}}^2_2-\frac 1{p+1}\norm{u}_{p+1}^{p+1}.
\]
Note, due to the positivity of $Q$, we have
\[
E_K[u_Q]=E[u_Q].
\]
Next, just like above, but using the Gagliardo--Nirenberg inequality \eqref{GN} applied to $|u|$ we can show a similar bound to \eqref{Pbound}
\be \label{lowerbound}
 P \left( \norm{\grad |u|}_2^2 \norm{u}_2^{2\frac{(1-s_c)}{s_c}} \right)\leq E_K[u]\norm{u}_2^{2\frac{(1-s_c)}{s_c}}. 
\ee

\subsection{Virial Identity}
The virial identity for the standard NLS is well-known \cite{Vlasov1971, zakharov1972collapse, glassey}. For a general linear emNLS it was derived first in \cite{FanelliVega09}. Including nonlinearity is an easy extension (see e.g. \cite{CCL}), and it can be stated as follows
\be\label{virial}
\begin{split}
\partial_t^2\int \abs{x}^2\abs{u}^2 dx&=4n(p-1)E_A[u]-2(n(p-1)-4)\norm{\nabla_Au}^2-2n(p-1)\int A_0\abs{u}^2dx\\
&\qquad\qquad-4\int_{\R^n}\abs{x} \partial_rA_0 \abs{u}^2\ dx
+8\int_{\R^n}\abs{x}\mbox{Im}(uB_\tau\cdot \overline{\nabla_A u}) \ dx.
\end{split}
\ee
If the electromagnetic potentials are all identically zero, then the third, fourth and fifth terms are not present, and we recover the virial identity for the standard NLS.

We note that even in the purely magnetic equation, an extra term involving the trapping term $B_\tau$ (the last term in \eqref{virial}) remains. Therefore, besides being an obstruction to dispersion, this term, in some sense, can also be an obstruction to virial blow-up arguments.

\subsection{Magnetic Hardy's inequality}
 We recall a magnetic Hardy's inequality (see e.g. \cite[A.1]{FanelliVega09}). If $u\in H^1_A(\mathbb{R}^n)$ and $n\geq 3$, then
 \be\label{mH}
 \norm{\frac u{\abs{x}}}_2\leq \frac 2{n-2}\norm{\nabla_A u}_2.
 \ee
 \subsection{Conditions on the Potentials}
If we include the electric potential, we can still work with the same norm for $B_\tau$ as before, and then we also need bounds on the electric potential and its derivative. More specifically, in this setting we consider potentials $A,A_0$ so that, with notation $f_{-} = \mbox{max} \{ -f,0\}$, the estimates
\begin{align}
\int A_{0,-} \abs{u}^2 dx\leq a_0 \norm{\nabla_A u}^2_2,\label{a0}\\
\int_{\R^n}(\partial_rA_0)_{-}\abs{x} \abs{u}^2\ dx\leq a_1\norm{\nabla_A u}^2,\label{a1}\\
\int_{\R^n}\abs{x}\mathcal Im(uB_\tau\cdot \overline{\nabla_A u}) \ dx \leq b_0 \norm{\nabla_A u}^2_2\label{b0},
\end{align}
hold for all $u \in H^1_{A}(\mathbb{R}^N)$ with constants $0 \le a_0 <1$, and $0 \le a_1,b_0 < \infty$.

As mentioned before, we are interested in the potentials considered in the article \cite{JFA}, as well as the conditions on potentials in \cite{FanelliVega09}. The conditions in \cite{FanelliVega09, JFA} are imposed on the potentials in order to obtain weak dispersion and Strichartz estimates. By \cite[Lemma 2.3]{JFA}, we have $0<a_0<1$.

Next, using \eqref{mH}, we can take
 \be
 b_0=\norm{\abs{x}^2B_\tau}_\infty \frac 2{n-2}.
 \ee
 
 The condition on $B_\tau$ for $n\geq 4$ in \cite{FanelliVega09, JFA} is 
 \be
  \norm{\abs{x}^{2}B_{\tau}}^{2}_{L^{\infty}_{x}}+2\norm{\abs{x}^{3}(\partial_{r}A_{0})_{+}}_{L^{\infty}_{x}}<\frac 23(n-1)(n-3).
 \ee
Hence in that case, 
\[
b_0\leq \frac 43\frac{(n-1)(n-3)}{n-2}.
\]
This can be compared with bounds in \eqref{E:trap1}. We discuss connections between these conditions and those necessary for Strichartz estimates in 3D in Appendix \ref{app1}.

Finally, having nonzero $(\partial_r A_0)_-$ is the place where we need to impose an additional condition that does not appear in the work of \cite{FanelliVega09, JFA}. There, a smallness condition on $(\partial_r A_0)_+$ is needed. We use the condition \eqref{a1} to establish a finite time blow-up. Therefore, if we are only after global wellposedness, the condition \eqref{a1} would not be needed (see Corollary \ref{cor1}).
\section{Main Lemmas}\label{S:3}
There are several technical lemmas we need to establish.

\subsection{Remainder Identity for Kato's Inequality}
Here we establish a remainder identity for Kato's inequality on $H^1_A$. 
 
\begin{lemma}[Remainder Identity for Kato's Inequality]\label{remainder}
For $u \in H^1_A$, the identity
\begin{align*}
\norm{\grad_A u}_2^2 - \norm{ \grad |u|}_2^2 &= \int_{u \neq 0} \Big| \mbox{Im} \Big[ \frac{\overline{u}}{|u|} \grad_A u \Big]\Big|^2 dx
\end{align*}
is satisfied.
\end{lemma}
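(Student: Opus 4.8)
The plan is to prove the claimed identity pointwise on the set $\{u\neq 0\}$, to check that both sides vanish a.e.\ on $\{u=0\}$, and then integrate. The starting point is a chain rule for $|u|$: since $u\in H^1_A$ implies $|u|\in H^1$ — this is essentially the content of the diamagnetic inequality \eqref{Kato} — and since $z\mapsto|z|$ is smooth away from the origin, one has $2|u|\,\grad|u|=\grad|u|^2=2\,\mbox{Re}(\overline{u}\,\grad u)$ a.e.\ on $\{u\neq0\}$, hence $\grad|u|=\mbox{Re}\!\big(\tfrac{\overline{u}}{|u|}\grad u\big)$ there.

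Next I would bring in the magnetic potential. Writing $\grad_A u=\grad u+iAu$ and multiplying by the unimodular factor $\overline{u}/|u|$ on $\{u\neq0\}$ gives, componentwise,
\[
\frac{\overline{u}}{|u|}\grad_A u=\frac{\overline{u}}{|u|}\grad u+iA|u|,
\]
and since $A$ is real-valued and $|u|$ is real, the added term $iA|u|$ is purely imaginary. Consequently
\[
\mbox{Re}\!\Big(\frac{\overline{u}}{|u|}\grad_A u\Big)=\mbox{Re}\!\Big(\frac{\overline{u}}{|u|}\grad u\Big)=\grad|u|\qquad\text{on }\{u\neq0\}.
\]
Because $|\overline{u}/|u||=1$, for each component $D_j u$ we have $|D_j u|^2=\big|\tfrac{\overline{u}}{|u|}D_j u\big|^2=\big(\mbox{Re}\,\tfrac{\overline{u}}{|u|}D_j u\big)^2+\big(\mbox{Im}\,\tfrac{\overline{u}}{|u|}D_j u\big)^2$; summing in $j$ and using the previous display yields the pointwise identity
\[
|\grad_A u|^2=\big|\grad|u|\big|^2+\Big|\mbox{Im}\!\Big(\frac{\overline{u}}{|u|}\grad_A u\Big)\Big|^2\qquad\text{a.e. on }\{u\neq0\}.
\]

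To finish, I would handle $\{u=0\}$: for $u\in H^1_A$ one has $u,|u|\in H^1_{\mathrm{loc}}$, so $\grad u=0$ and $\grad|u|=0$ a.e.\ on $\{u=0\}$, while $Au=0$ there trivially, hence $\grad_A u=0$ a.e.\ on $\{u=0\}$ as well; thus $|\grad_A u|^2$ and $|\grad|u||^2$ both vanish a.e.\ on this set and it contributes nothing. Integrating the pointwise identity over $\R^n=\{u\neq0\}\cup\{u=0\}$ then gives
\[
\norm{\grad_A u}_2^2=\norm{\grad|u|}_2^2+\int_{u\neq0}\Big|\mbox{Im}\!\Big(\frac{\overline{u}}{|u|}\grad_A u\Big)\Big|^2 dx,
\]
which is the assertion. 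The only genuinely delicate ingredients are the real-analysis facts invoked — that $|u|\in H^1$ with the chain rule $\grad|u|=\mbox{Re}(\tfrac{\overline{u}}{|u|}\grad u)$ valid on $\{u\neq0\}$, and that a Sobolev function has vanishing gradient a.e.\ on its zero set; both are standard (see \cite{LiebLoss}) and can alternatively be obtained by regularizing $|u|$ with $\sqrt{|u|^2+\varepsilon^2}$, running the same computation, and letting $\varepsilon\to0$, which is the classical route to \eqref{Kato}. I expect the bookkeeping of that limiting argument, should one prefer to present it, to be the main (though routine) obstacle; the algebraic identity itself is immediate once the chain rule is in hand.
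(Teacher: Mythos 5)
Your argument is correct and follows essentially the same route as the paper's proof: a pointwise identity on $\{u\neq 0\}$ obtained by multiplying $\grad_A u$ by the unimodular factor $\overline{u}/|u|$ and splitting into real and imaginary parts (the real part being $\grad|u|$, as in \cite[Thm 6.17]{LiebLoss}), together with the vanishing of $\grad_A u$ and $\grad|u|$ a.e.\ on $\{u=0\}$ (the content of \cite[Thm 6.19]{LiebLoss}), followed by integration. The only difference is presentational: you derive the chain rule and the zero-set facts explicitly where the paper cites them.
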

\begin{proof}
Given $u \in H^1_A$, we have pointwise \cite[Thm 6.17]{LiebLoss}
\begin{align*}
|\partial_j |u||^2 &= \begin{cases} \left|\mbox{Re} \Big[ \frac{\overline{u}}{|u|} \partial_j u \Big]\right|^2 & u \neq 0\\
0 &u = 0 \end{cases}\\
&= \begin{cases} \left| \mbox{Re} \Big[ \frac{\overline{u}}{|u|} D_j u \Big] \right|^2 & u \neq 0\\
0 & u = 0 . \end{cases}
\end{align*}
Furthermore by \cite[Thm 6.19]{LiebLoss} we have that a.e.
\[
\Big| D_j u \Big|^2 = \begin{cases} \Big| \frac{\overline{u}}{\abs{u}} D_j u \Big|^2 & u \neq 0\\ 0 & u=0. \end{cases}
\]
It follows that a.e.
\[ 
|\grad_A u|^2 - |\grad |u||^2 =  \begin{cases} \Big|  \mbox{Im} \Big[ \frac{\overline{u}}{|u|} \nabla_Au \Big] \Big|^2 & u \neq 0\\ 0 & u=0.
\end{cases}
\]
From here the result follows by integration, which completes the proof.
\end{proof}

\subsection{Double Cut Lemma}
We show that the set of the initial data strictly below the ground state is the union of two disjoint open sets such that within each set we can also compare the size of $\norm{\nabla_Au}_2$ to $\norm{\nabla\abs{u}}_2$.

\begin{lemma}[Double Cut Lemma]\label{lem:double-cut}
For a potential $A \in L^2_{\text{loc}}(\mathbb{R}^n;\mathbb{R}^n)$, consider
\[
\mathcal{R} = \{u \in H^1_A(\mathbb{R}^N): E_A[u]^{s_c}\norm{u}_2^{2(1-s_c)}<E[u_Q]^{s_c}\norm{u_Q}_2^{2(1-s_c)}\}.
\]
There are two disjoint sets $\mathcal{R}_1,\mathcal{R}_2$ with $\mathcal{R}=\mathcal{R}_1 \cup \mathcal{R}_2$ such that for each $u \in \mathcal{R}_1$,
\begin{equation}\label{E:R1}
\norm{\grad|u|}_2^{s_c} \norm{u}_2^{1-s_c} \le \norm{\grad_A u}_2^{s_c} \norm{u}_2^{1-s_c} < \norm{\grad u_Q}_2^{s_c} \norm{u_Q}_2^{1-s_c},
\end{equation}
and for each $u \in \mathcal{R}_2$,
\begin{equation}\label{E:R2}
\norm{\grad u_Q}_2^{s_c} \norm{u_Q}_2^{1-s_c} < \norm{\grad|u|}_2^{s_c} \norm{u}_2^{1-s_c} \le \norm{\grad_A u}_2^{s_c} \norm{u}_2^{1-s_c}.
\end{equation}
\end{lemma}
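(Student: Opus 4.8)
The plan is to use the minimal energy function $P$ from \eqref{mef} together with its concavity and the lower bounds \eqref{Pbound} and \eqref{lowerbound}. First I would record that by Kato's inequality \eqref{Kato} we always have $\norm{\grad|u|}_2 \le \norm{\grad_A u}_2$, so $\norm{\grad|u|}_2^{s_c}\norm{u}_2^{1-s_c} \le \norm{\grad_A u}_2^{s_c}\norm{u}_2^{1-s_c}$ for every $u \in H^1_A$; this gives the middle inequality in both \eqref{E:R1} and \eqref{E:R2} for free. So the real content is to show that for $u \in \mathcal{R}$ the quantity $y(u) := \norm{\grad_A u}_2^{s_c}\norm{u}_2^{1-s_c}$ (equivalently its square, or the quantity $x(u):=\norm{\grad_A u}_2^2\norm{u}_2^{2(1-s_c)/s_c}$) is never equal to the ground-state value $x_Q := \norm{\grad u_Q}_2^2\norm{u_Q}_2^{2(1-s_c)/s_c}$, and then to \emph{define} $\mathcal{R}_1 = \{u \in \mathcal{R} : x(u) < x_Q\}$ and $\mathcal{R}_2 = \{u \in \mathcal{R}: x(u) > x_Q\}$. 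By construction these are disjoint with union $\mathcal{R}$, and the strict inequalities on $\norm{\grad_A u}_2^{s_c}\norm{u}_2^{1-s_c}$ versus $\norm{\grad u_Q}_2^{s_c}\norm{u_Q}_2^{1-s_c}$ in \eqref{E:R1}–\eqref{E:R2} follow because $s_c, 1-s_c > 0$ make $x \mapsto x^{s_c/2}$ monotone, so $x(u) < x_Q$ is equivalent to $y(u) < y_Q$.

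The key step is the exclusion of the boundary value $x(u) = x_Q$. Here I would argue by contradiction: suppose $u \in \mathcal{R}$ with $x(u) = x_Q$. Since $x_Q$ is exactly the location \eqref{maxp} of the maximum of $P$, and $P$ attains there the value \eqref{max}, namely $E[u_Q]\norm{u_Q}_2^{2(1-s_c)/s_c}$, the bound \eqref{Pbound} gives
\[
E[u_Q]\norm{u_Q}_2^{2\frac{(1-s_c)}{s_c}} = P(x(u)) \le E_A[u]\norm{u}_2^{2\frac{(1-s_c)}{s_c}}.
\]
Raising to the power $s_c$ (and multiplying by the appropriate power of the mass, which is legitimate since all quantities are nonnegative) contradicts the defining inequality $E_A[u]^{s_c}\norm{u}_2^{2(1-s_c)} < E[u_Q]^{s_c}\norm{u_Q}_2^{2(1-s_c)}$ of $\mathcal{R}$. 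One should be slightly careful about the two normalizations of the mass-energy quantity appearing in the paper (the $2(1-s_c)$ power in the theorem statement versus the $2(1-s_c)/s_c$ power in $P$'s bound), but these are equivalent after raising to the $s_c$ power, so this is routine bookkeeping. This shows $x(u) \ne x_Q$ on all of $\mathcal{R}$, so indeed $\mathcal{R} = \mathcal{R}_1 \cup \mathcal{R}_2$ disjointly.

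Finally I would note the remaining inequalities come almost immediately: for $u \in \mathcal{R}_1$, $x(u) < x_Q$ gives $y(u) < y_Q$ by monotonicity, and combined with Kato $\norm{\grad|u|}_2^{s_c}\norm{u}_2^{1-s_c} \le y(u)$ we get the full chain \eqref{E:R1}; for $u \in \mathcal{R}_2$ the same reasoning with reversed inequality gives $y(u) > y_Q$, hence \eqref{E:R2}. For openness of $\mathcal{R}_1, \mathcal{R}_2$ (asserted implicitly by the earlier remark that \eqref{nablaless} and \eqref{nablamore} are the only possibilities, and used later), I would observe that $u \mapsto E_A[u]$, $u \mapsto \norm{u}_2$ and $u \mapsto \norm{\grad_A u}_2$ are continuous on $H^1_A$, so $\mathcal{R}$ is open and the partition into $x(u) < x_Q$ and $x(u) > x_Q$ is a partition into relatively open sets. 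I do not anticipate a serious obstacle here; the only subtlety, and the one place where the hypothesis $u \in \mathcal{R}$ is genuinely used rather than just concavity of $P$, is the strictness in excluding $x(u) = x_Q$, which as above is exactly where the strict mass-energy bound enters.
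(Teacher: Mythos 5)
There is a genuine gap: your argument does not prove the first (leftmost) inequality in \eqref{E:R2}. You claim that Kato's inequality \eqref{Kato} gives "the middle inequality in both \eqref{E:R1} and \eqref{E:R2} for free" and that the rest follows from the trichotomy on $x(u)=\norm{\grad_A u}_2^2\norm{u}_2^{2(1-s_c)/s_c}$. That works for \eqref{E:R1}, where the chain is $\norm{\grad|u|}_2^{s_c}\norm{u}_2^{1-s_c}\le\norm{\grad_A u}_2^{s_c}\norm{u}_2^{1-s_c}<\norm{\grad u_Q}_2^{s_c}\norm{u_Q}_2^{1-s_c}$: Kato supplies the first inequality and $x(u)<x_Q$ the second. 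But in \eqref{E:R2} the required chain is $\norm{\grad u_Q}_2^{s_c}\norm{u_Q}_2^{1-s_c}<\norm{\grad|u|}_2^{s_c}\norm{u}_2^{1-s_c}\le\norm{\grad_A u}_2^{s_c}\norm{u}_2^{1-s_c}$, and here Kato only supplies the \emph{right}-hand inequality, while $x(u)>x_Q$ only tells you that the \emph{larger} quantity $\norm{\grad_A u}_2^{s_c}\norm{u}_2^{1-s_c}$ exceeds the ground-state value. Since $\norm{\grad|u|}_2\le\norm{\grad_A u}_2$, this says nothing about whether $\norm{\grad|u|}_2^{s_c}\norm{u}_2^{1-s_c}$ also exceeds it; your sentence ``the same reasoning with reversed inequality gives $y(u)>y_Q$, hence \eqref{E:R2}'' is a non sequitur at exactly this point. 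This lower bound on the Kato-reduced gradient is the actual content of the lemma (it is what later yields $\mathcal{F}_{RK}(u)\le\mathcal{F}_{KE}(u)$, i.e.\ \eqref{1stbound}, which drives the blow-up argument), so it cannot be omitted.

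The paper closes this gap with an extra step you are missing: assuming $u\in\mathcal{R}$ and \eqref{as2}, one combines $E_A[u]\norm{u}_2^{2(1-s_c)/s_c}<E[u_Q]\norm{u_Q}_2^{2(1-s_c)/s_c}$ with the kinetic-energy bound to deduce a \emph{lower} bound on the potential energy,
\[
\norm{u}_{p+1}^{p+1}\norm{u}_2^{2\frac{(1-s_c)}{s_c}}>\norm{u_Q}_{p+1}^{p+1}\norm{u_Q}_2^{2\frac{(1-s_c)}{s_c}},
\]
and then applies the Gagliardo--Nirenberg inequality \eqref{GN} to $|u|$ (with equality for $u_Q$, since $Q$ is the optimizer) to convert this into the desired strict lower bound \eqref{wts} on $\norm{\grad|u|}_2^{s_c}\norm{u}_2^{1-s_c}$. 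The remainder of your proposal (the definition of $\mathcal{R}_1,\mathcal{R}_2$, the exclusion of $x(u)=x_Q$ via $P(x_Q)=\max P$ and \eqref{Pbound}, and the openness remark) matches the paper's argument and is fine, but without the Gagliardo--Nirenberg step the lemma as stated is not proved.
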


\begin{proof}
\par Recall the minimal energy function from Section \ref{Pdef}
\[P(x) = \frac{1}{2} x - \frac{C_{GN}}{p+1} x^{\frac{n(p-1)}{4}},\]
where for $u \in H^1_A$, the inequality
\[P( \norm{\grad_A u}_2^2 \norm{u}_2^{2\frac{(1-s_c)}{s_c}} ) \le E_A[u] \norm{u}_2^{2\frac{(1-s_c)}{s_c}}\]
holds by \eqref{lowerbounde}. Now, since for $u\in \mathcal{R}$ we have
\[
E_A[u] \norm{u}_2^{2\frac{(1-s_c)}{s_c}} < E[u_Q] \norm{u_Q}_2^{2\frac{(1-s_c)}{s_c}},
\]
and by \eqref{GSe}
\[P(\norm{\grad u_Q}_2^2 \norm{u_Q}_2^{2\frac{(1-s_c)}{s_c}}) = E[u_Q] \norm{u_Q}_2^{2\frac{(1-s_c)}{s_c}},\]
it must be that if $u\in \mathcal{R}$, we either have
\begin{equation}\label{E:R1a}
\norm{\grad_A u}_2^{s_c} \norm{u}_2^{1-s_c} < \norm{\grad u_Q}_2^{s_c} \norm{u_Q}_2^{1-s_c}
\end{equation}
or
\begin{equation}\label{as2}
 \norm{\grad_A u}_2^{s_c} \norm{u}_2^{1-s_c} > \norm{\grad u_Q}_2^{s_c} \norm{u_Q}_2^{1-s_c}.
\end{equation}
We now show that we can further describe the set $\mathcal{R}$ as a union of $\mathcal{R}_1$ and $\mathcal{R}_2$. By Kato's inequality, to establish the full result it is sufficient to show that each $u$ satisfying \eqref{as2}
also satisfies
\be\label{wts}
\norm{\grad |u|}_2^{s_c} \norm{u}_2^{1-s_c} > \norm{\grad u_Q}_2^{s_c} \norm{u_Q}_2^{1-s_c}.
\ee
Suppose we have $u \in H^1_A$ such that
\[
E_A[u]^{s_c}\norm{u}_2^{2(1-s_c)}<E[u_Q]^{s_c}\norm{u_Q}_2^{2(1-s_c)},
\]
and \eqref{as2}.
Then 
\[
E_A[u] \norm{u}_2^{2\frac{(1-s_c)}{s_c}} < E[u_Q]\norm{u_Q}_2^{2\frac{(1-s_c)}{s_c}},
\]
which with \eqref{as2} 
implies
\[
 \norm{u}_{p+1}^{p+1}\norm{u}_2^{2\frac{(1-s_c)}{s_c}} >   \norm{u_Q}_{p+1}^{p+1}\norm{u_Q}_2^{2\frac{(1-s_c)}{s_c}}.
\]

Applying the Gagliardo-Nirenberg inequality \eqref{GN} on the left hand side and using that $Q$ is the optimizer, and so is $u_Q$ by scale invariance (i.e., applying equality in \eqref{GN} to the right-hand side above), we obtain

\begin{align*}
{C_{GN}}\norm{\grad |u|}_{2}^{\frac{n(p-1)}{2}}\norm{u}_2^{2-\frac{(n-2)(p-1)}{2}} \norm{u}_2^{2\frac{(1-s_c)}{s_c}} > {C_{GN}}\norm{\grad u_Q}_{2}^{\frac{n(p-1)}{2}}\norm{u_Q}_2^{2-\frac{(n-2)(p-1)}{2}} \norm{u_Q}_2^{2\frac{(1-s_c)}{s_c}}.
\end{align*}
Then rewriting the exponents, as in \eqref{rewriting}, gives
\begin{align*}
  \norm{\grad |u|}_{2}^{2} \left(\norm{\grad |u|}_2^{s_c} \norm{u}_2^{1-s_c} \right)^{p-1} \norm{u}_2^{2\frac{(1-s_c)}{s_c}} >  \norm{\grad u_Q}_{2}^{2} \left(\norm{\grad u_Q}_2^{s_c} \norm{u_Q}_2^{1-s_c} \right)^{p-1} \norm{u_Q}_2^{2\frac{(1-s_c)}{s_c}}.
  \end{align*}
  Manipulating the exponents further, we see the above is exactly
 \begin{align*}
  \left(\norm{\grad |u|}_2^{s_c} \norm{u}_2^{1-s_c} \right)^{(p-1) + \frac{2}{s_c}}  >   \left(\norm{\grad u_Q}_2^{s_c} \norm{u_Q}_2^{1-s_c} \right)^{(p-1) + \frac{2}{s_c}} ,
  \end{align*}
  which in turn implies \eqref{wts} as claimed.

\end{proof}

\subsection{Proximity Estimate for Kinetic Energy Above the Ground State}
In this section, we establish the main estimate, which will be used in our blow-up argument.

For $u \in \mathcal{R}_2$, as defined in Lemma \ref{lem:double-cut}, we consider two non-negative quantities, which are useful for measuring how far $u$ is from being a ground state. The first quantity arises from the remainder $u$ in Kato's inequality and is given by
\[\mathcal{F}_{RK}(u) \defneq \norm{\grad_A u}_2^2\norm{u}_2^{2\frac{(1-s_c)}{s_c}} -  \norm{\grad |u|}_2^2\norm{u}_2^{2\frac{(1-s_c)}{s_c}}.\]
The second quantity arises from the kinetic energy of $u$ and is given by
\[\mathcal{F}_{KE}(u) \defneq  \norm{\grad_A u}_2^2\norm{u}_2^{2\frac{(1-s_c)}{s_c}} - \norm{\grad u_Q}_2^2\norm{u_Q}_2^{2\frac{(1-s_c)}{s_c}} .\]
The Double Cut Lemma \ref{lem:double-cut} equation \eqref{E:R2} implies the bound
\be\label{1stbound}
\mathcal{F}_{RK}(u) \leq \mathcal{F}_{KE}(u),
\ee
when $u \in \mathcal{R}_2$. In fact, what the lemma below shows is that the geometry of $\mathcal{R}_2$ additionally provides a bound
\[\mathcal{F}_{RK}(u) \lesssim \left(\mathcal{F}_{KE} (u) \right)^2,\]
which holds on $\mathcal{R}_2$ and is stronger than the linear bound \eqref{1stbound} when $\mathcal{F}_{KE}$ is small.

\begin{lemma}[Ground State Proximity Bound]\label{lem:GSP}
There exists a constant $C_{p,n}>0$ such that 
\begin{equation}\label{GSPeq}
\left(\mathcal{F}_{RK}(u)\right)^{1/2} \le C_{p,n} \, \mathcal{F}_{KE}(u)
\end{equation}
for $u \in \mathcal{R}_2$.\\
\par Furthermore, the inequality holds for the following values of $C_{p,n}$.
\begin{itemize}
\item For $n(p-1) \le 8$,
\[C_{p,n}=\left( \frac{16(p+1)}{C_{GN} n(p-1)\left(n(p-1) - 4\right)}\right)^{-1/2} \left( \norm{\grad u_Q }_2^2\norm{u_Q}_2^{2\frac{(1-s_c)}{s_c}}  \right)^{\frac{n(p-1) - 8}{8}}.\]
\item For $n(p-1) > 8$, 
\begin{align*}
C_{p,n} & = 1 + \left( \frac{16(p+1)}{C_{GN} n(p-1)\left(n(p-1) - 4\right)}\right)^{-1/2} \left( 1+  \norm{\grad u_Q }_2^2\norm{u_Q}_2^{2\frac{(1-s_c)}{s_c}}  \right)^{ \frac{n(p-1) - 8}{8}}.
\end{align*}
\end{itemize}
\end{lemma}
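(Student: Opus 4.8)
Write $X = \norm{\grad_A u}_2^2\norm{u}_2^{2\frac{(1-s_c)}{s_c}}$, $Y = \norm{\grad|u|}_2^2\norm{u}_2^{2\frac{(1-s_c)}{s_c}}$, and $x_Q = \norm{\grad u_Q}_2^2\norm{u_Q}_2^{2\frac{(1-s_c)}{s_c}}$, so that $\mathcal{F}_{RK}(u)=X-Y$ and $\mathcal{F}_{KE}(u)=X-x_Q$. The starting observation is that, in the purely magnetic setting, $E_A[u]-E_K[u]=\tfrac12(\norm{\grad_A u}_2^2-\norm{\grad|u|}_2^2)$, so multiplying by $\norm{u}_2^{2\frac{(1-s_c)}{s_c}}$ gives the exact identity $(E_A[u]-E_K[u])\norm{u}_2^{2\frac{(1-s_c)}{s_c}}=\tfrac12\mathcal{F}_{RK}(u)$. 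Combining this with the Kato-energy bound \eqref{lowerbound}, which reads $P(Y)\le E_K[u]\norm{u}_2^{2\frac{(1-s_c)}{s_c}}$, and with the defining property $E_A[u]\norm{u}_2^{2\frac{(1-s_c)}{s_c}}<P(x_Q)$ of $\mathcal{R}$ (using $P(x_Q)=E[u_Q]\norm{u_Q}_2^{2\frac{(1-s_c)}{s_c}}$ from \eqref{GSe}), I obtain
\[
P(Y)+\tfrac12\mathcal{F}_{RK}(u)\le E_K[u]\norm{u}_2^{2\frac{(1-s_c)}{s_c}}+\tfrac12\mathcal{F}_{RK}(u)=E_A[u]\norm{u}_2^{2\frac{(1-s_c)}{s_c}}<P(x_Q).
\]
Since $u\in\mathcal{R}_2$ forces $x_Q<Y\le X$ (the inequality $Y\le X$ being Kato's inequality and $Y>x_Q$ being \eqref{E:R2}), and $P$ is concave down with maximum at $x_Q$, it is decreasing on $[x_Q,\infty)$, hence $P(Y)\ge P(X)$. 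Therefore $\tfrac12\mathcal{F}_{RK}(u)<P(x_Q)-P(X)$, and the whole problem reduces to estimating $P(x_Q)-P(X)$ from above by a multiple of $\mathcal{F}_{KE}(u)^2=(X-x_Q)^2$.

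For that, I would Taylor-expand $P$ about its critical point $x_Q$: since $P'(x_Q)=0$, there is $\xi\in[x_Q,X]$ with $P(x_Q)-P(X)=\tfrac12|P''(\xi)|\,(X-x_Q)^2\le \tfrac12\big(\sup_{[x_Q,X]}|P''|\big)\mathcal{F}_{KE}(u)^2$, where one computes directly $|P''(x)|=\frac{C_{GN}}{p+1}\cdot\frac{n(p-1)}{4}\cdot\big(\frac{n(p-1)}{4}-1\big)x^{\frac{n(p-1)}{4}-2}=\frac{C_{GN}n(p-1)(n(p-1)-4)}{16(p+1)}\,x^{\frac{n(p-1)}{4}-2}$. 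When $n(p-1)\le 8$ the exponent $\frac{n(p-1)}{4}-2$ is nonpositive, so $|P''|$ is nonincreasing on $[x_Q,X]$ and $\sup_{[x_Q,X]}|P''|=|P''(x_Q)|$; plugging this in and taking square roots yields exactly the claimed first value of $C_{p,n}$ (with $x_Q^{(n(p-1)-8)/8}$).

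When $n(p-1)>8$ the factor $x^{\frac{n(p-1)}{4}-2}$ is increasing and unbounded, so the supremum over $[x_Q,X]$ cannot be controlled uniformly — this is the main obstacle. I would circumvent it with a dichotomy in the size of $\mathcal{F}_{KE}(u)$. If $\mathcal{F}_{KE}(u)\ge 1$, the crude linear bound $\mathcal{F}_{RK}(u)\le\mathcal{F}_{KE}(u)$ from \eqref{1stbound} already gives $(\mathcal{F}_{RK}(u))^{1/2}\le(\mathcal{F}_{KE}(u))^{1/2}\le\mathcal{F}_{KE}(u)\le C_{p,n}\mathcal{F}_{KE}(u)$ because $C_{p,n}\ge 1$ by construction. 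If instead $\mathcal{F}_{KE}(u)<1$, then $X=x_Q+\mathcal{F}_{KE}(u)<x_Q+1$, so $\sup_{[x_Q,X]}|P''|\le|P''(x_Q+1)|$, and the Taylor estimate above gives $(\mathcal{F}_{RK}(u))^{1/2}\le\big(\frac{C_{GN}n(p-1)(n(p-1)-4)}{16(p+1)}\big)^{1/2}(1+x_Q)^{\frac{n(p-1)-8}{8}}\mathcal{F}_{KE}(u)\le C_{p,n}\mathcal{F}_{KE}(u)$, matching the second stated value. I expect the only delicate points to be bookkeeping the equivalent forms of the exponents (as in \eqref{rewriting}, \eqref{useful}) and confirming that $P'(x_Q)=0$, which is exactly the statement that $P$ attains its maximum at $x=x_Q$, recorded in \eqref{max}--\eqref{maxp}.
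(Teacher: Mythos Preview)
Your proposal is correct and is essentially the paper's argument: both establish $\tfrac12\mathcal{F}_{RK}(u)\le P(x_Q)-P(X)$ (the paper writes this as $\tfrac12\mathcal{F}_{RK}(u)\le P^*(\mathcal{F}_{KE}(u))$ with $P^*(z)=P(x_Q)-P(z+x_Q)$) and then control $P(x_Q)-P(X)$ by $\tfrac12\sup|P''|\,(X-x_Q)^2$, splitting into the same cases $n(p-1)\le8$ and $n(p-1)>8$ with the same $\mathcal{F}_{KE}(u)\gtrless1$ dichotomy in the latter. The only cosmetic difference is that you invoke Taylor's theorem with Lagrange remainder directly, while the paper passes through the inverse $(P^*)^{-1}$ and a short calculus lemma to reach the same quadratic bound.
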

\par Before we begin the proof of Lemma \ref{lem:GSP} we make some remarks. We also state a simple Calculus lemma that is used in the proof. 

The bound of Lemma \ref{lem:GSP} arises from the geometry of $\mathcal{R}_2$. The minimal energy function $P$ introduced in Section \ref{Pdef} \eqref{mef} provides a description of the relevant geometry. Prior to establishing the desired bound, we define a rigid transformation of the minimal energy function into a form better suited to this context.
\par Recall, $P(x)$ is concave down on $[0,\infty)$, attaining its maximum of 
$E[u_Q] \norm{u_Q}_2^{2\frac{(1-s_c)}{s_c}}$ at $x_\ast= \norm{\grad u_Q}_2^2 \norm{u_Q}_2^{2\frac{(1-s_c)}{s_c}}$. We define a transformation, $P^*$, which recenters the curve $P$ to the ground state.
\begin{align*}
P^*(x) &:=P(x_\ast) -P\left( x+ x_\ast\right) \\
&=E[u_Q]\norm{u_Q}_2^{2\frac{(1-s_c)}{s_c}}  -P\left( x+  \norm{\grad u_Q }_2^2\norm{u_Q}_2^{2\frac{(1-s_c)}{s_c}} \right).\\ 
\end{align*}
This transformation is defined so that $P^*$ is concave up on $[-x_\ast,\infty)$ and increasing on $[0,\infty)$, and such that $P^*(0)=(P^*)'(0) = 0$. 

We now recall the bound \eqref{lowerbound}, namely,
\[
E_K[u]\norm{u}_2^{2\frac{(1-s_c)}{s_c}} \ge P \left( \norm{\grad |u|}_2^2 \norm{u}_2^{2\frac{(1-s_c)}{s_c}} \right).
\]
Under the transformation of $P$ to $P^{\ast}$, this bound implies
\[ 
P(x_\ast)-  E_K[u] \norm{u}_2^{2\frac{(1-s_c)}{s_c}}\le P^*\left(\norm{\grad |u|}_2^2\norm{u}_2^{2\frac{(1-s_c)}{s_c}}  - x_\ast\right),
\]
and applying the main energy assumption \eqref{eless} to the left hand side, with the explicit value of $P(x_{\ast})$, results in the bound
\begin{equation}\label{pbound}
\frac{1}{2}\norm{u}_2^{2\frac{(1-s_c)}{s_c}} \left[ \norm{\grad_A u}_2^2 - \norm{\grad |u|}_2^2\right] \le P^*\left(\norm{\grad |u|}_2^2\norm{u}_2^{2\frac{(1-s_c)}{s_c}}  - x_\ast\right),
\end{equation}
which by \eqref{Kato} and the definition of $\mathcal F_{RK}(u)$ and $\mathcal F_{KE}(u)$ gives
\be\label{pbound2}
 \frac{1}{2}\mathcal{F}_{RK}(u)\leq P^\ast(\mathcal F_{KE}(u)).
\ee
\par We now consider the inverse of $P^*$ on $[0,\infty)$, denoted by $(P^*)^{-1}$. Since $P^*$ is strictly increasing in $[0,\infty)$, the inverse $(P^*)^{-1}$ is well defined and increasing. Applying $\left(P^{\ast}\right)^{-1}$ to both sides of \eqref{pbound2} results in
\begin{equation}\label{inversebound}
(P^*)^{-1}\left( \frac{1}{2}\mathcal{F}_{RK}(u) \right) \le \mathcal{F}_{KE} (u).
\end{equation}
 
To establish the Lemma \ref{lem:GSP}, we use a simple Calculus fact that we state without proof.
\begin{lemma}\label{Calc}
Let $b\in (0,\infty]$, $f:[0,b)\to [0,\infty)$ be twice differentiable with well-defined inverse, $f^{-1},$ and such that $f(0)=f'(0)=0$. If $f''(x)\leq C$ for some $C>0$ for all $x\in[0,b)$, then $f(x)\leq \frac C2 x^2$ for all $x\in[0,b)$, and $f^{-1}(x)\geq\sqrt{\frac 2C}\sqrt{x}$ for all $x\in [0,f(b))$.
\end{lemma}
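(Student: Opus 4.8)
The plan is to establish Lemma \ref{Calc} by two applications of Taylor's theorem with integral (or Lagrange) remainder, the first for $f$ itself and the second, essentially the same argument, for $f^{-1}$. For the bound on $f$: since $f(0) = f'(0) = 0$, Taylor's theorem gives, for each $x \in [0,b)$,
\[
f(x) = f(0) + f'(0)x + \int_0^x (x-t)f''(t)\,dt = \int_0^x (x-t)f''(t)\,dt \le C\int_0^x (x-t)\,dt = \frac{C}{2}x^2,
\]
using $f''(t) \le C$ and $x - t \ge 0$ on $[0,x]$. This already proves the first assertion.

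For the bound on $f^{-1}$: fix $y \in [0, f(b))$ and set $x = f^{-1}(y) \in [0,b)$, which is well-defined since $f$ has a well-defined inverse. By the first part, $y = f(x) \le \frac{C}{2}x^2$, and since $x \ge 0$ and $C > 0$ this yields $x \ge \sqrt{2y/C} = \sqrt{2/C}\,\sqrt{y}$; that is, $f^{-1}(y) \ge \sqrt{2/C}\,\sqrt{y}$, as claimed. (One should note in passing that $f$ is nondecreasing on $[0,b)$ — indeed $f'(x) = \int_0^x f''(t)\,dt$ is the integral of a function bounded above by $C$, but more to the point $f$ is injective and continuous with $f(0)=0$ and $f \ge 0$, forcing monotonicity — so that $f^{-1}$ maps $[0,f(b))$ into $[0,b)$ and the substitution is legitimate.)

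The argument has no real obstacle; the only point requiring a word of care is that the hypotheses as stated do not a priori bound $f''$ below, so one cannot directly invoke monotonicity of $f$ from the sign of $f'$ without the observation above. Since the lemma is used in the proof of Lemma \ref{lem:GSP} with $f = P^*$ restricted to $[0,\infty)$ — which is genuinely increasing and convex there — this subtlety is immaterial in the application, and the statement as given is exactly what is needed to pass from the bound \eqref{inversebound} to the estimate \eqref{GSPeq}. The paper states this lemma without proof precisely because it is a routine calculus fact; the sketch above records the one-line Taylor argument that justifies it.
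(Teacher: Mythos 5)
Your argument is correct: the Lagrange-remainder form of Taylor's theorem (which, unlike the integral form, needs only twice differentiability rather than integrability of $f''$) gives $f(x)=\tfrac{1}{2}f''(\xi)x^2\le \tfrac{C}{2}x^2$ directly, and the inverse bound follows by the substitution you describe; your parenthetical remark on monotonicity of $f$ correctly disposes of the only subtlety. The paper explicitly states this lemma without proof, so there is no authorial argument to compare against; your sketch is the standard one and fills the gap adequately.
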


We are now ready to start the proof.
\begin{proof}[Proof of Lemma \ref{lem:GSP}]
We consider three cases. 

\noindent
{\textbf{Case 1: $n(p-1)=8$.}} 
In this case, by completing the square, we can see $P^*:[0,\infty)\to [0,\infty)$ is simply a quadratic function 
\[
P^\ast(x)=\frac{C_{GN}}{p+1}x^2,
\]
so
\[
(P^\ast)^{-1}(x)=\sqrt{\frac{p+1}{C_{GN}}}\sqrt{x},
\]
and \eqref{inversebound} implies
\[
\sqrt{\frac{p+1}{2C_{GN}}}\left(\mathcal{F}_{RK}(u)\right)^\frac 12 \le \mathcal{F}_{KE} (u),
\]
which gives \eqref{GSPeq} as needed.

\noindent
{\textbf{Case 2: $n(p-1)<8$.}}
We begin by computing 
\begin{align*}
\partial_x^2 P^*(x) &= - \partial_x^2 P \left(x+  \norm{\grad u_Q }_2^2 \norm{u_Q}_2^{2\frac{(1-s_c)}{s_c}} \right)\\
&= \frac{C_{GN}}{p+1} \left( \frac{n(p-1)}{4} \right) \left( \frac{n(p-1)}{4} - 1 \right) \left(x+  \norm{\grad u_Q }_2^2 \norm{u_Q}_2^{2\frac{(1-s_c)}{s_c}} \right)^{\frac{n(p-1) - 8}{4}}\\
&\leq \frac{C_{GN}}{p+1} \left( \frac{n(p-1)}{4} \right) \left( \frac{n(p-1)}{4} - 1 \right) \left(  \norm{\grad u_Q }_2^2 \norm{u_Q}_2^{2\frac{(1-s_c)}{s_c}} \right)^{\frac{n(p-1) - 8}{4}},
\end{align*}
for $x\in [0,\infty).$ Then by Lemma \ref{Calc} 
    \begin{align*}
    (P^*)^{-1}(x) &\ge \left(\frac{2(p+1)}{C_{GN} \left( \left(\frac{n(p-1)}{4}\right)^2 - \frac{n(p-1)}{4} \right)\left(  \norm{\grad u_Q }_2^2 \norm{u_Q}_2^{2\frac{(1-s_c)}{s_c}}\right)^{\frac{n(p-1) - 8}{4}}}\right)^{1/2} x^{1/2}\\
    &= \sqrt{2} \left[ \left( \frac{16(p+1)}{C_{GN} n(p-1)\left(n(p-1) - 4\right)}\right)^{-1/2} \left(  \norm{\grad u_Q }_2^2\norm{u_Q}_2^{2\frac{(1-s_c)}{s_c}} \right)^{ \frac{n(p-1) - 8}{8}} \right]^{-1} x^{1/2},
    \end{align*}
    and \eqref{inversebound}
implies \eqref{GSPeq} in this case.

\noindent
{\textbf{Case 3: $n(p-1)>8$.}}
In this case, we consider $\mathcal{F}_{KE} (u)< 1$ and $\mathcal{F}_{KE} (u) \ge 1$.  If $\mathcal{F}_{KE} (u) \ge 1$, then \eqref{1stbound} implies \eqref{GSPeq}.

Next, if $\mathcal{F}_{KE} (u) < 1$, we observe that in $[0,1)$ we have
\begin{align*}
\partial_x^2 P^*(x) &= - \partial_x^2 P \left(x+  \norm{\grad u_Q }_2^2 \norm{u_Q}_2^{2\frac{(1-s_c)}{s_c}} \right)\\
&\leq \frac{C_{GN}}{p+1} \left( \frac{n(p-1)}{4} \right) \left( \frac{n(p-1)}{4} - 1 \right) \left(1+  \norm{\grad u_Q }_2^2 \norm{u_Q}_2^{2\frac{(1-s_c)}{s_c}} \right)^{\frac{n(p-1) - 8}{4}},
\end{align*}
and by Lemma \ref{Calc} for all $x\in [0,P^\ast(1))$
 
  \begin{align*}
    (P^*)^{-1}(x) &\ge \sqrt{2} \left[ \left( \frac{16(p+1)}{C_{GN} n(p-1)\left(n(p-1) - 4\right)}\right)^{-1/2} \left( 1+  \norm{\grad u_Q }_2^2\norm{u_Q}_2^{2\frac{(1-s_c)}{s_c}}  \right)^{ \frac{n(p-1) - 8}{8}}\right]^{-1} x^{1/2}.
    \end{align*}
We now note that since $\mathcal{F}_{KE} (u) < 1$, by \eqref{pbound2}, $\frac 12 \mathcal F_{RK}(u)\in[0,P^\ast(1))$, so by Lemma \ref{Calc}, we obtain

\begin{align*}
(P^*)^{-1} &\left(\frac{1}{2} \mathcal{F}_{RK}(u)\right) \ge \\ & \left[ \left( \frac{16(p+1)}{C_{GN} n(p-1)\left(n(p-1) - 4\right)}\right)^{-1/2} \left( 1+  \norm{\grad u_Q }_2^2\norm{u_Q}_2^{2\frac{(1-s_c)}{s_c}}  \right)^{ \frac{n(p-1) - 8}{8}}\right]^{-1} \left( \mathcal{F}_{RK}(u) \right)^{1/2}.
\end{align*}
Hence, using \eqref{inversebound} as above,  we obtain
\begin{align*}
\left( \mathcal{F}_{RK} (u)  \right)^{1/2}& \,\le\, \left[\left( \frac{16(p+1)}{C_{GN} n(p-1)\left(n(p-1) - 4\right)}\right)^{-1/2} \left(1 +  \norm{\grad u_Q }_2^2\norm{u_Q}_2^{2\frac{(1-s_c)}{s_c}}  \right)^{\frac{n(p-1) - 8}{8}} \right]\,\mathcal{F}_{KE}(u)\\
&\le\, \left[1+ \left( \frac{16(p+1)}{C_{GN} n(p-1)\left(n(p-1) - 4\right)}\right)^{-1/2} \left(1 + \norm{\grad u_Q }_2^2\norm{u_Q}_2^{2\frac{(1-s_c)}{s_c}}   \right)^{\frac{n(p-1) - 8}{8}} \right]\,\mathcal{F}_{KE}(u),
\end{align*}
as claimed.
\end{proof}

\subsection{Bound for the Term Involving Trapping}
The following lemma builds on the previous lemmas and is a key component of the proof of the blow-up.
\begin{lemma}\label{btaubound}
If $\norm{\abs{x}^2 B_\tau}_\infty$ is small enough, and $u\in \mathcal{R}_2$, then
\[8 \norm{u}_2^{2\frac{(1-s_c)}{s_c}}\, \mbox{Im} \int |x|uB_{\tau} \cdot \overline{\grad_A u}\ dx \le\, 2(n(p-1)-4) \, \mathcal{F}_{KE}(u).\]
\end{lemma}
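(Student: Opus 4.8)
The plan is to bound the trapping term pointwise by the trapping norm times $|x|^{-1}$, apply magnetic Hardy, and then use the quadratic proximity bound from Lemma~\ref{lem:GSP} together with the geometry of $\mathcal{R}_2$ to absorb everything into $\mathcal{F}_{KE}(u)$. Concretely, first I would estimate
\[
8 \norm{u}_2^{2\frac{(1-s_c)}{s_c}}\, \mbox{Im} \int |x|uB_{\tau} \cdot \overline{\grad_A u}\ dx \le 8\norm{|x|^2 B_\tau}_\infty \, \norm{u}_2^{2\frac{(1-s_c)}{s_c}}\int \frac{|u|}{|x|} |\grad_A u|\ dx,
\]
and then by Cauchy--Schwarz and the magnetic Hardy inequality \eqref{mH}, $\int \frac{|u|}{|x|}|\grad_A u|\,dx \le \frac{2}{n-2}\norm{\grad_A u}_2^2$. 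At this stage the left-hand side is bounded by $\frac{16}{n-2}\norm{|x|^2 B_\tau}_\infty\, \norm{\grad_A u}_2^2 \norm{u}_2^{2\frac{(1-s_c)}{s_c}}$.

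The next step is the crucial one: I must show this quantity is $\le 2(n(p-1)-4)\,\mathcal{F}_{KE}(u)$. Writing $K(u) := \norm{\grad_A u}_2^2 \norm{u}_2^{2\frac{(1-s_c)}{s_c}}$ and $x_\ast = \norm{\grad u_Q}_2^2\norm{u_Q}_2^{2\frac{(1-s_c)}{s_c}}$, we have $\mathcal{F}_{KE}(u) = K(u) - x_\ast$. However, one cannot simply bound $K(u)$ by a constant multiple of $K(u)-x_\ast$, since on $\mathcal{R}_2$ we only know $K(u) > x_\ast$, and $\mathcal{F}_{KE}(u)$ can be arbitrarily small. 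This is the main obstacle. The resolution is to go back one step and not throw away the Kato remainder: instead of $\int \frac{|u|}{|x|}|\grad_A u|\,dx \le \frac{2}{n-2}\norm{\grad_A u}_2^2$, I would keep the sharper observation that the trapping term is driven by the \emph{non-radial} part of $\grad_A u$, i.e. by $\mbox{Im}[\frac{\overline u}{|u|}\grad_A u]$, which is exactly what the Remainder Identity for Kato's inequality (Lemma~\ref{remainder}) controls. More precisely, since $B_\tau \cdot \frac{x}{|x|} = 0$ (the trapping component is orthogonal to the radial direction — this follows from $(B_\tau)_k = \frac{x_j}{|x|}F_{jk}$ and antisymmetry of $F_{jk}$, so $x_k (B_\tau)_k = 0$), the integrand $\mbox{Im}(|x| u B_\tau \cdot \overline{\grad_A u})$ only sees the component of $\grad_A u$ perpendicular to $x/|x|$, and one can show $|\mbox{Im}(u B_\tau\cdot\overline{\grad_A u})| \le |B_\tau|\,|u|\,|\mbox{Im}[\frac{\overline u}{|u|}\grad_A u]|$ pointwise where $u\neq 0$. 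Applying Cauchy--Schwarz and then magnetic Hardy to the $|u|/|x|$ factor gives
\[
8\norm{u}_2^{2\frac{(1-s_c)}{s_c}}\,\mbox{Im}\int |x| u B_\tau\cdot\overline{\grad_A u}\,dx \;\le\; \frac{16}{n-2}\norm{|x|^2 B_\tau}_\infty\,\norm{\grad_A u}_2\,\Big(\textstyle\int_{u\neq 0}\big|\mbox{Im}[\frac{\overline u}{|u|}\grad_A u]\big|^2 dx\Big)^{1/2}\norm{u}_2^{2\frac{(1-s_c)}{s_c}},
\]
and by Lemma~\ref{remainder} the last factor is $\norm{u}_2^{(1-s_c)/s_c}\,\mathcal{F}_{RK}(u)^{1/2}$, times $\norm{\grad_A u}_2$; collecting the $\norm{u}_2$ powers appropriately yields a bound of the shape $\frac{16}{n-2}\norm{|x|^2 B_\tau}_\infty\, K(u)^{1/2}\,\mathcal{F}_{RK}(u)^{1/2}$.

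Now I invoke Lemma~\ref{lem:GSP}: $\mathcal{F}_{RK}(u)^{1/2} \le C_{p,n}\,\mathcal{F}_{KE}(u)$, so the left-hand side is $\le \frac{16}{n-2}\norm{|x|^2 B_\tau}_\infty\, C_{p,n}\, K(u)^{1/2}\,\mathcal{F}_{KE}(u)$. It remains to control $K(u)^{1/2} = \big(\norm{\grad_A u}_2^2\norm{u}_2^{2(1-s_c)/s_c}\big)^{1/2}$; here I would use the energy assumption \eqref{eless} to get a uniform-in-$u$ upper bound on $\norm{u}_2^{(1-s_c)/s_c}\norm{\grad_A u}_2$ — but since $\mathcal{R}_2$ has no upper bound on the kinetic energy, this also fails directly. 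The cleanest fix (and the one matching the stated constant $C_\tau$ in the remark) is to split: if $\norm{\grad_A u}_2\norm{u}_2^{(1-s_c)/s_c}$ is bounded by, say, $2\norm{\grad u_Q}_2\norm{u_Q}_2^{(1-s_c)/s_c}$ we use the quadratic bound with the explicit $\norm{u_Q}_2$-dependent constant, while if it is large then $\mathcal{F}_{KE}(u)$ is itself large and the linear bound \eqref{1stbound} closes the estimate — exactly as in Case 3 of Lemma~\ref{lem:GSP}. In either regime, choosing $\norm{|x|^2 B_\tau}_\infty$ small enough (smaller than the $C_\tau$ of the remark, which contains precisely the factor $\frac{(n-2)(n(p-1)-4)}{16\,C_{p,n}\norm{u_Q}_2^{(1-s_c)/s_c}\norm{\grad u_Q}_2}$) forces the constant in front of $\mathcal{F}_{KE}(u)$ down to $2(n(p-1)-4)$, which is the claim. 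The main obstacle, as flagged, is precisely this absorption of the unbounded factor $K(u)^{1/2}$ — it is handled not by a global bound but by the two-regime split, leveraging that the quadratic gain in Lemma~\ref{lem:GSP} beats the linear growth of the offending term near the ground state.
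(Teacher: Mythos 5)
Your proposal is correct and follows essentially the same route as the paper: exploiting that $B_\tau$ is real-valued to replace $\overline{\grad_A u}$ by $\mbox{Im}\big[\frac{\overline u}{|u|}\grad_A u\big]$, then applying Cauchy--Schwarz, the magnetic Hardy inequality \eqref{mH}, and the remainder identity of Lemma \ref{remainder} to arrive at the bound $\frac{16}{n-2}\norm{|x|^2B_\tau}_\infty\,\norm{\grad_A u}_2\norm{u}_2^{\frac{(1-s_c)}{s_c}}\,\mathcal{F}_{RK}(u)^{1/2}$, which is then closed via Lemma \ref{lem:GSP} together with the linear bound \eqref{1stbound}. The only (immaterial) difference is how the unbounded factor $\norm{\grad_A u}_2\norm{u}_2^{\frac{(1-s_c)}{s_c}}$ is absorbed: you split into two regimes according to its size, whereas the paper writes its square as $x_\ast+\mathcal{F}_{KE}(u)$, uses $\sqrt{a+b}\le\sqrt a+\sqrt b$, and handles the two resulting terms $I$ and $II$ by Lemma \ref{lem:GSP} and \eqref{1stbound} respectively — both versions give the claim with a non-sharp $C_\tau$.
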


\begin{proof}
Let $u\in \mathcal R_2$, then $\mathcal{F}_{KE}(u)> 0$.  We can consider the integral over the union of the sets where $u=0$ and when $u\neq 0$. If the integral is taken over the set where $u=0$, then the inequality holds since we are considering the intercritical case.  Therefore, we can assume $u\neq 0$.  We proceed to bound the integral over that set.  Since $B_\tau$ is real-valued we have
\begin{align*}
\abs{ \mbox{Im} \int |x|uB_{\tau} \cdot \overline{\grad_A u}\ dx}
&=\abs{ \int \abs{x}B_\tau\cdot \mbox{Im}\left(  u\overline{\grad_A u}\right) \ dx}\\
&\leq \norm{\abs{x}^2 B_\tau}_\infty \norm{\frac u{\abs{x}}}_2 \norm{ \mbox{Im}\left( \frac {u}{\abs{u}}\overline{\grad_A u}\right)}_2\\
&\leq \frac 2{n-2}\norm{\abs{x}^2 B_\tau}_\infty \norm{\nabla_Au}_2 \norm{ \mbox{Im}\left( \frac {u}{\abs{u}}\overline{\grad_A u}\right)}_2,
\end{align*}
by Hardy's inequality \eqref{mH}.  Then by Lemma \ref{remainder} and \eqref{1stbound} we have
\begin{align*}
& \norm{u}_2^{2\frac{(1-s_c)}{s_c}} \left| 8\mbox{Im} \int |x|uB_{\tau} \cdot \overline{\grad_A u}\ dx \right|\le \norm{u}_2^{\frac{(1-s_c)}{s_c}} \frac{16}{n-2} \norm{|x|^2 B_{\tau}}_{\infty} \norm{\nabla_Au}_2  \left[ \mathcal{F}_{RK}(u)\right]^{1/2}\\
& = \frac{16}{n-2} \norm{|x|^2 B_{\tau}}_{\infty} \left[\norm{u_Q}_2^{2\frac{(1-s_c)}{s_c}}\norm{\grad u_Q}_2^2  + \norm{u}_2^{2\frac{(1-s_c)}{s_c}}\norm{\nabla_A u}_2^2 - \norm{u_Q}_2^{2\frac{(1-s_c)}{s_c}}\norm{\grad u_Q}_2^2 \right]^{1/2}  \left[\mathcal{F}_{RK}(u)\right]^{1/2}\\
& \le \frac{16}{n-2} \norm{|x|^2 B_{\tau}}_{\infty} \,\left[\norm{u_Q}_2^{\frac{(1-s_c)}{s_c}} \norm{\grad u_Q}_2\right]\, \left[\mathcal{F}_{RK}(u) \right]^{1/2} + \frac{16}{n-2} \norm{|x|^2 B_{\tau}}_{\infty} \mathcal{F}_{KE}(u)\\
&= I + II.
\end{align*}
To complete the proof we show
\[I \le (n(p-1)-4) \mathcal{F}_{KE}(u),\]
and
\[II \le (n(p-1)-4) \mathcal{F}_{KE}(u).\]

By Lemma \ref{lem:GSP} there is a constant $C_{p,n}$ such that
\[\left[\mathcal{F}_{RK}(u)\right]^{1/2} \le C_{p,n}\, \mathcal{F}_{KE}(u).\]
It follows
\begin{align*}
I &= \frac{16}{n-2} \norm{|x|^2 B_{\tau}}_{\infty} \,\left[\norm{u_Q}_2^{\frac{(1-s_c)}{s_c}} \norm{\grad u_Q}_2\right]\, \left[\mathcal{F}_{RK}(u) \right]^{1/2}\\
& \le \left[ \frac{16\, C_{p,n}}{(n-2)(n(p-1)-4)}\norm{u_Q}_2^{\frac{(1-s_c)}{s_c}} \norm{\grad u_Q}_2 \norm{|x|^2 B_{\tau}}_{\infty} \right] \, \cdot \, \left[ (n(p-1)-4) \mathcal{F}_{KE}(u) \right].
\end{align*}
So we obtain the desired bound if
\[\norm{|x|^2 B_{\tau}}_{\infty} \le \frac{(n-2)(n(p-1)-4)}{16\, C_{p,n}\norm{u_Q}_2^{\frac{(1-s_c)}{s_c}} \norm{\grad u_Q}_2}.\]
Next, if
\[\norm{|x|^2 B_{\tau}}_{\infty} \le \frac{(n-2)(n(p-1)-4)}{16},\]
then
\begin{align*}
II = \frac{16}{n-2} \norm{|x|^2 B_{\tau}}_{\infty} \mathcal{F}_{KE}(u)\le (n(p-1)-4) \mathcal{F}_{KE}(u),
\end{align*}
as needed.
\end{proof}

\section{Proof of Theorem \ref{main}}\label{S:4}

We break the proof of the theorem into two main parts. We begin with the global existence.
\subsection{Proof of Theorem \ref{main}: Global Existence}
 The reasoning for global existence is the same as in \cite{HR07}. The details are as follows. The conservation of mass and energy, \eqref{Pbound}, and \eqref{eless} imply for all time
 \[
P(\norm{\grad_A u(t)}_2^2 \norm{u(t)}_2^{2\frac{(1-s_c)}{s_c}}) \leq E_A[u(t)] \norm{u(t)}_2^{2\frac{(1-s_c)}{s_c}}<E[u_Q] \norm{u_Q}_2^{2\frac{(1-s_c)}{s_c}},
\]
and hence,
\begin{equation}\label{eq:nonequality}
\norm{\grad_A u(t)}_2^2 \norm{u(t)}_2^{2\frac{(1-s_c)}{s_c}} \neq \norm{\grad u_Q}_2^2 \norm{u_Q}_2^{2\frac{(1-s_c)}{s_c}}.
\end{equation}
Then, if \eqref{nablaless} holds, by continuity of $\norm{\nabla_A u(t)}_2$ in $t$, we must have \eqref{nablalesst}. Then the standard blow-up criterion implies that the solution must exist for all time.

\subsection{Proof of Theorem \ref{main}: Blow-Up}
Now, suppose \eqref{nablamore} holds. Then \eqref{nablamoret} holds by \eqref{eq:nonequality} and the continuity of $\norm{\nabla_A u(t)}_2$.  
Next, from the virial identity \eqref{virial} we get
\begin{align*}
\norm{u}_2^{2\frac{(1-s_c)}{s_c}}\, \partial_t^2 \int |x|^2 |u|^2 dx&= 4n(p-1) E_A[u]\norm{u}_2^{2\frac{(1-s_c)}{s_c}} - 2(n(p-1)-4)\norm{\grad_A u}_2^2\norm{u}_2^{2\frac{(1-s_c)}{s_c}}\\
&\quad +\norm{u}_2^{2\frac{(1-s_c)}{s_c}}\,8\mbox{Im} \int |x|uB_{\tau} \cdot \overline{\grad_A u} dx.
\end{align*}
From \eqref{graduQ} and \eqref{EuQ} we have
\be\label{equal}
4n(p-1)E[u_Q]\norm{u_Q}_2^{2\frac{(1-s_c)}{s_c}} - 2(n(p-1)-4)\norm{\grad u_Q}_2^2\norm{u_Q}_2^{2\frac{(1-s_c)}{s_c}}\,=0,
\ee
which allows us to rephrase the virial identity as
\begin{align*}
\norm{u}_2^{2\frac{(1-s_c)}{s_c}}\, \partial_t^2 \int |x|^2 |u|^2 dx&= 4n(p-1)\left(E_A[u] \norm{u}_2^{2\frac{(1-s_c)}{s_c}} - E[u_Q]\norm{u_Q}_2^{2\frac{(1-s_c)}{s_c}}\right)\\
 & \quad - 2(n(p-1)-4) \left( \norm{\grad_A u}_2^2\norm{u}_2^{2\frac{(1-s_c)}{s_c}}-  \norm{\grad u_Q}_2^2\norm{u_Q}_2^{2\frac{(1-s_c)}{s_c}}\right)\\
&\quad +8 \norm{u}_2^{2\frac{(1-s_c)}{s_c}}\, \mbox{Im} \int |x|uB_{\tau} \cdot \overline{\grad_A u}  dx.
\end{align*}
From \eqref{eless} we have there exists $\varepsilon>0$ such that for all time
\[4n(p-1)\left( \norm{u}_2^{2\frac{(1-s_c)}{s_c}}\, E[u] -\norm{u_Q}_2^{2\frac{(1-s_c)}{s_c}}\, E[u_Q] \right) <-\varepsilon.\]
Then by definition of $\mathcal F_{KE}(u)$, it follows 
\[\norm{u}_2^{2\frac{(1-s_c)}{s_c}}\, \partial_t^2 \int |x|^2 |u|^2  dx< 8\norm{u}_2^{2\frac{(1-s_c)}{s_c}}\, \mbox{Im} \int |x|uB_{\tau} \cdot \overline{\grad_A u} dx - 2(n(p-1)-4) \, \mathcal{F}_{KE}(u) - \varepsilon.\]
From Lemma \ref{btaubound} we have
\[8\norm{u}_2^{2\frac{(1-s_c)}{s_c}}\, \mbox{Im} \int |x|uB_{\tau} \cdot \overline{\grad_A u}\, \le\, 2(n(p-1)-4) \, \mathcal{F}_{KE}(u),\]
so then the blow-up follows by the classical virial identity argument.
 
\section{Proofs With the Electric Potential}\label{S:5}

In this section we prove Theorem \ref{main2}.  We consider the most interesting case, namely, when $A_{0,-}\neq 0$, and $(\partial_r A_0)_-\neq 0$.  If $A_{0,-}= 0$, and $(\partial_r A_0)_-= 0$, the statements of the theorems, and the proofs would be essentially identical to the purely magnetic case. It will be clear from the proof below why this is the case.

The main ideas are as follows.  To obtain global existence, we bound the energy similarly  as in \eqref{lowerbounde}

\be\label{gather}
\begin{split}
  E_A[u]  \norm{u}_2^{2\frac{(1-s_c)}{s_c}}&\geq \frac 12\norm{\nabla_A u}_2^2 \norm{u}_2^{2\frac{(1-s_c)}{s_c}} -\frac{C_{GN}}{p+1}\left(\norm{\grad_Au}_2^{2} \norm{u}_2^{2\frac{(1-s_c)}{s_c}} \right)^{\frac{n(p-1)}{4}} \\
 &\quad -\frac 12 \norm{u}_2^{2\frac{(1-s_c)}{s_c}} \int A_{0,-}\abs{u}^2 dx \\
 &\geq \frac 12\norm{\nabla_A u}_2^2 \norm{u}_2^{2\frac{(1-s_c)}{s_c}}- \frac{C_{GN}}{p+1}\left(\norm{\grad_Au}_2^{2} \norm{u}_2^{2\frac{(1-s_c)}{s_c}} \right)^{\frac{n(p-1)}{4}}\\
 &\quad- \frac{a_{0}}{2}\norm{\nabla_A u}^2_2 \norm{u}_2^{2\frac{(1-s_c)}{s_c}},
 \end{split}
 \ee
 by \eqref{a0}.  Based on this, we modify the definition of the minimal energy function to be
 \[
 F(x)=\frac 12(1-a_0)x-\frac{C_{GN}}{p+1}x^\frac{n(p-1)}{4}.
 \]
 A computation shows that $F$ achieves the maximum at $(1-a_0)^\frac{2}{s_c(p-1)}\norm{\grad u_Q}_2^2 \norm{u_Q}_2^{2\frac{(1-s_c)}{s_c}}$ with the value of $(1-a_0)^{1+\frac 2{s_c(p-1)}}E[u_Q] \norm{u_Q}_2^{2\frac{(1-s_c)}{s_c}}$ (c.f. \eqref{maxp} and \eqref{max}, respectively).
 
 It follows that if \eqref{eless2} and \eqref{nablaless2} hold (or if \eqref{eless4} and \eqref{nablaless4} hold), then again by the continuity of $\norm{\nabla_A u(t)}_2$ in $t$, \eqref{nablaless2t} (and \eqref{nablaless4t}) holds for all time, and the solution exists globally. This concludes the proof of global existence for Theorem \ref{main2} and the proof of Corollary \ref{cor1}.

 \subsection{Proof of Blow-Up in Theorem \ref{main2}}
 
 On the other hand, if \eqref{nablamore2} holds, then again by continuity and \eqref{gather}, \eqref{nablamore2t} follows.  To obtain blowup for finite variance, and given the smallness condition \eqref{c1}, the virial identity, \eqref{virial}, gives us
 \be
\begin{split}
\norm{u}_2^{2\frac{(1-s_c)}{s_c}}\, \partial_t^2 \int |x|^2 |u|^2 dx &=4n(p-1)E_A[u]\norm{u}_2^{2\frac{(1-s_c)}{s_c}}-2(n(p-1)-4)\norm{\nabla_Au}^2_2\norm{u}_2^{2\frac{(1-s_c)}{s_c}}\\
&\qquad -2n(p-1)\int A_0\abs{u}^2dx\norm{u}_2^{2\frac{(1-s_c)}{s_c}}-4\int_{\R^n}\abs{x} \partial_rA_0 \abs{u}^2\ dx\norm{u}_2^{2\frac{(1-s_c)}{s_c}}\\
&\qquad\qquad+8\int_{\R^n}\abs{x}\mathcal Im(uB_\tau\cdot \overline{\nabla_A u})  \ dx\norm{u}_2^{2\frac{(1-s_c)}{s_c}}\\
&\leq 4n(p-1)E_A[u]\norm{u}_2^{2\frac{(1-s_c)}{s_c}}-2(n(p-1)-4)\norm{\nabla_Au}^2_2\norm{u}_2^{2\frac{(1-s_c)}{s_c}}\\
&\qquad +2n(p-1)\int A_{0,-}\abs{u}^2dx\norm{u}_2^{2\frac{(1-s_c)}{s_c}}+4\int_{\R^n}\abs{x} (\partial_rA_0)_- \abs{u}^2\ dx\norm{u}_2^{2\frac{(1-s_c)}{s_c}}\\
&\qquad\qquad+8\int_{\R^n}\abs{x}\mathcal Im(uB_\tau\cdot \overline{\nabla_A u})  \ dx\norm{u}_2^{2\frac{(1-s_c)}{s_c}}.
\end{split}
\ee
Therefore, by \eqref{a0}, \eqref{a1}, \eqref{b0}
 \be\nonumber
\begin{split}
\norm{u}_2^{2\frac{(1-s_c)}{s_c}}\, \partial_t^2 \int |x|^2 |u|^2 dx &\leq  4n(p-1)E_A[u]\norm{u}_2^{2\frac{(1-s_c)}{s_c}}-2(n(p-1)-4)\norm{\nabla_Au}^2_2\norm{u}_2^{2\frac{(1-s_c)}{s_c}}\\
&\quad+(2n(p-1)a_0+4a_1+8b_0)\norm{\nabla_Au}^2_2\norm{u}_2^{2\frac{(1-s_c)}{s_c}},
\end{split}
\ee
and by \eqref{c1}, when collected, all the coefficients of $\norm{\nabla_Au}^2$ above add up to be negative, and we can use that by \eqref{nablamore2}, $\norm{\nabla_Au}^2_2 \norm{u}_2^{2\frac{(1-s_c)}{s_c}}>(1-a_0)^\frac{2}{s_c(p-1)}\norm{\nabla u_Q}^{2}_2\norm{u_Q}^{\frac{2(1-s_c)}{s_c}}_2$.  This gives

 \be\label{uselater}
\begin{split}
\norm{u}_2^{2\frac{(1-s_c)}{s_c}}\, \partial_t^2 \int |x|^2 |u|^2 dx &\leq  4n(p-1)E_A[u]\norm{u}_2^{2\frac{(1-s_c)}{s_c}}-{\bigg(}2(n(p-1)-4)(1-a_0)^\frac{2}{s_c(p-1)}\norm{\nabla u_Q}^{2}_2\norm{u_Q}^{2\frac{(1-s_c)}{s_c}}_2\\
&\quad-(2n(p-1)a_0+4a_1+8b_0)(1-a_0)^\frac{2}{s_c(p-1)}\norm{\nabla u_Q}^{2}_2\norm{u_Q}^{2\frac{(1-s_c)}{s_c}}_2{\bigg)}.
\end{split}
\ee
Next, by \eqref{eless2}, we have there exists $\varepsilon>0$ such that
\be\nonumber
\begin{split}
\norm{u}_2^{2\frac{(1-s_c)}{s_c}}\, \partial_t^2 \int |x|^2 |u|^2 dx
&\leq-\varepsilon+ 4n(p-1) (1-a_0)^{1+\frac 2{s_c(p-1)}}E[u_Q] \norm{u_Q}_2^{2\frac{(1-s_c)}{s_c}}-4(2a_0+a_1+2b_0)x_\ast\\
&\quad-2(n(p-1)-4)(1-a_0)^\frac{2}{s_c(p-1)}\norm{\nabla u_Q}^{2}_2\norm{u_Q}^{2\frac{(1-s_c)}{s_c}}_2\\
&\qquad+(2n(p-1)a_0+4a_1+8b_0)(1-a_0)^\frac{2}{s_c(p-1)}\norm{\nabla u_Q}^{2}_2\norm{u_Q}^{2\frac{(1-s_c)}{s_c}}_2.
\end{split}
\ee
Then \eqref{equal} allows us to make some cancellations and obtain 
  \begin{align*}
\norm{u}_2^{2\frac{(1-s_c)}{s_c}}\, \partial_t^2 \int |x|^2 |u|^2 dx
&<-\varepsilon-2a_0(n(p-1)-4)(1-a_0)^\frac{2}{s_c(p-1)}\norm{\nabla u_Q}^{2}_2\norm{u_Q}^{2\frac{(1-s_c)}{s_c}}_2-4(2a_0+a_1+2b_0)x_\ast\\
&\qquad+(2n(p-1)a_0+4a_1+8b_0)(1-a_0)^\frac{2}{s_c(p-1)}\norm{\nabla u_Q}^{2}_2\norm{u_Q}^{2\frac{(1-s_c)}{s_c}}_2\\
&= -\varepsilon,
\end{align*}
as needed.

\appendix 
\section{On Potentials in 3 Dimensions}\label{app1}
\par The results in this paper hold for potentials providing sufficient local theory and smallness condition on $\norm{|x|^2 B_{\tau}}_{\infty}$.

We have interest in the potentials considered by D'Ancona, Fanelli, Vega, and Visciglia \cite{JFA}, for which they have developed Strichartz estimates. In dimensions 4 and greater, they consider potentials with a smallness condition on $\norm{|x|^2 B_{\tau}}_{\infty}$, similar to our assumption. In dimension 3, this is replaced with a smallness condition on $\norm{|x|^{3/2} B_{\tau}}_{L^2_r L^{\infty}(S_r)}$, where the $L^2_r L^{\infty}(S_r)$ norm is given by \eqref{Srnorm}. Due to the differing assumptions, we discuss construction of potentials in dimension 3, which imply existence of potentials having nonzero trapping component, for which Strichartz estimates hold, and for which our results hold. We also note that these constructions have several straightforward generalizations.

Consider a function $f:\mathbb{R}^+ \rightarrow \mathbb{R}$ such that $f(|x|^2) \in C^1_{loc}(\mathbb{R}^N \setminus \{0\})$. We construct a potential
\[A \coloneqq f(|x|^2) \left( -x_2 , x_1, 0 \right).\]
For $x \neq 0$, we have
\[\grad \cdot A = 0.\]
If we also assume $|x|\, f(|x|^2) \in L^4_{loc}(\mathbb{R}^N)$, then the Leinfelder-Simander Theorem implies that $\laplacian_A$ is essentially self adjoint on $C^{\infty}_0(\mathbb{R}^N)$.
The potential $A$ has trapping component
\[ B_{\tau} = \frac{2 \left(f(|x|^2) + |x|^2 f'(|x|^2) \right)}{|x|} \left(-x_2 ,x_1,0 \right).\]
In particular, for $\alpha \in \mathbb{R}$, and $f(r) = \frac{1}{r^\alpha}$, we have
\[B_{\tau} = \frac{2(1-\alpha)}{|x|^{2\alpha + 1}} (-x_2,x_1,0).\]
By gluing, given any $\beta \neq 1$, and $\gamma \neq 1$, we can construct a potential $A$ such that $\beta$ is minimal satisfying
\[\norm{|x|^{\beta} A}_{L^{\infty}(B(1))}<\infty,\]
$\beta+1$ is minimal satisfying
\[\norm{|x|^{\beta+1} B_{\tau}}_{L^{\infty}(B(1))}<\infty,\]
$\gamma$ is minimal satisfying
\[\norm{|x|^{\gamma} A}_{L^{\infty}(\mathbb{R}^N \setminus B(1))}<\infty\,,\]
and $\gamma+1$ is minimal satisfying
\[\norm{|x|^{\gamma+1} B_{\tau}}_{L^{\infty}(\mathbb{R}^N \setminus B(1))}<\infty\,.\]
It is straightforward that there exist similar constructions respecting the norm $\norm{|x|^{\alpha} B_{\tau}}_{L^2_r L^{\infty}(S_r)}$ rather than $\norm{|x|^{\alpha} B_{\tau}}_{L^{\infty}(B(1))}$.
In other words, we can construct potentials with relative freedom over the strength of singularity at $0$ as well as decay rate, and for $N \ge 3$ the main theorem holds for a nontrivial class of magnetic potentials.

\end{document}